\newtheorem{theorem}{Theorem}[section]
\newtheorem{lemma}[theorem]{Lemma}
\newtheorem{prop}[theorem]{Proposition}
\newtheorem{definition}[theorem]{Definition}
\newtheorem{corollary}[theorem]{Corollary}
\newtheorem{remark}[theorem]{Remark}
\def\step#1#2{\par\medskip \noindent{\underline{\it Step~#1.}}\emph{ #2}\par}
\def\eps{\varepsilon}
\def\lbar{\bar{\ell}}
\def\bal{\begin{aligned}}
\def\eal{\end{aligned}}
\def\proofof#1{\begin{proof}[Proof (of~#1).]}
\def\XXint#1#2#3{{\setbox0=\hbox{$#1{#2#3}{\int}$} \vcenter{\vspace{-1pt}\hbox{$#2#3$}}\kern-.5\wd0}}
\def\Xint#1{\mathchoice {\XXint\displaystyle\textstyle{#1}}{\XXint\textstyle\scriptstyle{#1}}{\XXint\scriptstyle\scriptscriptstyle{#1}}{\XXint\scriptscriptstyle\scriptscriptstyle{#1}}\!\int}
\def\intmed{\hbox{\ }\Xint{\hbox{\vrule height -0pt width 10pt depth 1pt}}}
\numberwithin{equation}{section}
\def\R{\mathbb R}
\def\N{\mathbb N}
\def\S{\mathbb S}
\def\H{{\mathcal H}}
\def\angle#1#2#3{#1\widehat{#2}#3}
\def\arc#1{\wideparen{#1}}
\def\step#1#2{\par\noindent{\underline{\it Step~#1.}}\emph{ #2}\\}
\def\segm#1{\overline{#1}}
\title{On the piecewise approximation of bi-Lipschitz curves}
\author[A.~Pratelli]{Aldo Pratelli}
\address{Department of Mathematics, University of Erlangen, Cauerstrasse 11, 90158 Erlangen, Germany}
\email{pratelli@math.fau.de}
\author[E.~Radici]{Emanuela Radici}
\address{Department of Mathematics, University of Erlangen, Cauerstrasse 11, 90158 Erlangen, Germany}
\email{radici@math.fau.de}
\begin{document}
\begin{abstract}
In this paper we deal with the task of uniformly approximating an $L$-biLipschitz curve by means of piecewise linear ones. This is rather simple if one is satisfied to have approximating functions which are $L'$-biLipschitz, for instance this was already done with $L'=4L$ in~\cite[Lemma~5.5]{DP}. The main result of this paper is to do the same with $L'=L+\eps$ (which is of course the best possible result); in the end, we generalize the result to the case of closed curves.
\end{abstract}

\maketitle

\section{Introduction}

A problem raised by Evans and Ball in the 1980's (see for instance~\cite{Ball2}), and still open in its full generality, is the following: can one approximate a planar bi-Sobolev homeomorphism with diffeomorphisms, or piecewise affine homeomorphisms, in the bi-Sobolev sense? This would be rather important for applications in the context of the non-linear elasticity. This problem and its partial solutions have an interesting history, one can see for instance the papers~\cite{BMC,IwKovOnn,DP,HP} to have an overview of what is now known.\par

In all the available results, for instance in those cited above, the authors use quite different strategies, but a common ingredient is to divide the domain in simple ones, namely, triangles or squares, and then to work on each of them. And, in particular, it is often important to approximate the value of a homeomorphism on the boundary of the triangle or square. In other words, the much simpler one-dimensional task (i.e., approximating a function defined on a segment, or on the boundary of a square) is one of the ingredients to solve the two-dimensional one, actually usually a very easy ingredient. Let us state this more precisely: we have a function $\varphi:[0,1]\to \R^2$, and we look for a piecewise linear function $\varphi_\eps:[0,1]\to\R^2$, which is uniformly close to $\varphi$ and which coincides with $\varphi$ at $0$ and $1$; this is of course very easy to reach. In addition, one has often the information that $\varphi$ is $L$-biLipschitz; in this case, it would be also interesting to have an estimate on the bi-Lipschitz constant of $\varphi_\eps$ (which is surely bi-Lipschitz, since it is piecewise linear). Surprisingly enough, this does not come for free; in particular, in~\cite[Lemma~5.5]{DP} it was proved that one can obtain a $4L$-biLipschitz approximating function $\varphi_\eps$, and the proof was simple but not straightforward.\par

The goal of the present paper is to obtain the sharp result in this direction, namely, that it is possible to have an approximating function $\varphi_\eps$ which is $(L+\eps)$-biLipschitz. To state it, let us first recall the definition of the bi-Lipschitz property.

\begin{definition}
The function $f: [0,1]\to \R^2$ is said to be \emph{$L$-biLipschitz} if for every $p,\, q\in [0,1]$
\[
\frac 1 L\, |p-q|\leq |f(p)-f(q)| \leq L |p-q|\,.
\]
Notice that the second inequality is the usual $L$-Lipschitz property; through this paper, we will refer to the first inequality as the \emph{inverse $L$-Lipschitz property}.
\end{definition}

Notice that a function can be $L$-biLipschitz only if $L\geq 1$, and actually if $L=1$ then the function must be linear (and then, there is no need of approximating it with piecewise linear functions!). As a consequence, we can always think that $L>1$. We can now state our main result.

\begin{theorem}\label{main}
Let $\varphi:[0,1]\to\R^2$ be an $L$-biLipschitz function, and $\eps>0$. Then there exists an $(L+\eps)$-biLipschitz function $\varphi_\eps:[0,1]\to\R^2$ such that
\begin{align}\label{claimmain}
\varphi_\eps(0)=\varphi(0)\,, && \varphi_\eps(1)=\varphi(1)\,, && \|\varphi-\varphi_\eps\|_{L^\infty}\leq \eps\,,
\end{align}
and $\varphi_\eps$ is finitely piecewise linear on $[0,1]$.
\end{theorem}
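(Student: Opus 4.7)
My proof plan is to let $\varphi_\eps$ be the piecewise linear interpolation of $\varphi$ on a finite partition $0 = t_0 < t_1 < \cdots < t_N = 1$ chosen adaptively. With this choice, $\varphi_\eps$ automatically matches $\varphi$ at the endpoints, is $L$-Lipschitz (each chord slope has length at most $L$ by Lipschitzness of $\varphi$), and is $L^\infty$-close to $\varphi$ provided the partition is sufficiently fine. The entire difficulty is the reverse bound $|\varphi_\eps(p) - \varphi_\eps(q)| \geq |p-q|/(L+\eps)$, and my strategy is to reduce it to two conditions on the partition: (i) the consecutive chord slopes $d_i = (\varphi(t_{i+1}) - \varphi(t_i))/(t_{i+1} - t_i)$ satisfy $|d_i - d_{i-1}| < \rho$ with $\rho = \eps/(2L(L+\eps))$; and (ii) the partition is fine enough that $\|\varphi - \varphi_\eps\|_\infty \leq \eta$ for some $\eta$ much smaller than the minimum mesh size.

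Granted (i) and (ii), the reverse inequality follows by case analysis on the positions of $p,q$. If $p, q$ lie in the same subinterval, the chord slope is at least $1/L$ by biLipschitzness of $\varphi$, and the bound with constant $L$ is immediate. If $p, q$ straddle a single interior node $t_i$, writing $\varphi_\eps(q) - \varphi_\eps(p) = (q - p) d_{i-1} + (q - t_i)(d_i - d_{i-1})$ and using $|d_{i-1}| \geq 1/L$ together with (i) gives a ratio of at least $1/L - \rho \geq 1/(L+\eps)$. If $p$ and $q$ are separated by at least two subinterval boundaries, then $|p-q|$ exceeds the minimum mesh size, so combining the $L^\infty$-closeness with biLipschitzness of $\varphi$ yields $|\varphi_\eps(p) - \varphi_\eps(q)| \geq |p-q|/L - 2\eta \geq |p-q|/(L+\eps)$ as soon as $\eta$ is chosen as in (ii).

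The principal obstacle is the construction of a finite partition meeting both (i) and (ii). At every Lebesgue point $t_*$ of $\varphi'$ the two sided chord slopes converge to $\varphi'(t_*)$ and the chord deviation per unit length tends to zero as the adjacent subintervals shrink, so the conditions are achievable locally on a set of full measure; but pointwise differentiability alone does not furnish a uniform admissible mesh size. I would therefore apply Lusin's theorem to extract a closed set $K \subset [0,1]$ of measure arbitrarily close to $1$ on which $\varphi'$ is continuous, hence uniformly continuous on $K$. Nodes placed in $K$ will admit a uniform mesh bound by a compactness argument, while the complementary open set $[0,1] \setminus K$, being of small total measure, can be absorbed by inserting the endpoints of its components as additional partition nodes and exploiting the smallness of their total length. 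Verifying that this last step actually preserves conditions (i) and (ii) uniformly—without requiring control of $\varphi$ on the pathological set $[0,1] \setminus K$—is, in my view, the most delicate part of the proof, and I expect the precise construction of the partition to dominate the technical content of the argument.
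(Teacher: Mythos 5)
Your plan for the good part of $[0,1]$ is sound and essentially parallels the paper's Section~\ref{sect2}: there too the key step is to show that near a Lebesgue point of $\varphi'$ the chord directions are nearly constant (Lemma~\ref{Lebesgue}, in particular~\eqref{predefofA}--\eqref{defofA}), and your Lusin route would serve the same purpose. Your quantitative conditions~(i)--(ii), and the three-way split on the positions of $p,q$, capture the correct mechanism for the inverse Lipschitz bound on the good set.

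The gap is exactly where you flag it, and it is structural rather than technical. Let $(a,b)$ be a component of $[0,1]\setminus K$ with $a,b$ inserted as nodes, let $d_{i-1}$ be the chord slope just to the left of $a$, and let $d_i=(\varphi(b)-\varphi(a))/(b-a)$. The slope $d_i$ is determined by $\varphi$ \emph{inside} $(a,b)$, over which you have no control. The biLipschitz hypothesis gives $|d_i|\in[1/L,L]$, and the inverse inequality for $\varphi$ applied to $p<a$ and $q=b$ yields $|\lambda d_i+(1-\lambda)d_{i-1}|\geq 1/L$, but only for $\lambda=(b-a)/(b-p)$ with $p$ in the left subinterval --- a range bounded away from $0$. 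For $q$ strictly inside $(a,b)$ the inverse inequality for $\varphi$ involves the chord from $\varphi(a)$ to $\varphi(q)$, \emph{not} $d_i$, so it gives no information about $d_i$. The interpolant $\varphi_\eps$, by contrast, always uses $d_i$ on $(a,b)$, so for $p<a<q<b$ it requires $|\lambda d_i+(1-\lambda)d_{i-1}|\geq 1/(L+\eps)$ for \emph{all} $\lambda=(q-a)/(q-p)\in(0,1)$. These are different requirements: the convex quadratic $\lambda\mapsto|\lambda d_i+(1-\lambda)d_{i-1}|^2$ can dip below $1/L^2$ in the middle of the unconstrained range of $\lambda$. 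In particular condition~(i), on which your ``single node'' case rests, simply has no reason to hold at these nodes, and neither refining the partition nor shrinking $|[0,1]\setminus K|$ helps, because the offending pair $(p,q)$ is local to the node $a$ and $d_i$ is unaffected by such modifications. Your ``many nodes'' case is also unavailable, since $|p-q|$ can be arbitrarily small.

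The paper's Section~\ref{sect3} exists precisely for this obstacle. Rather than interpolating $\varphi$ across a bad interval, it \emph{replaces} $\varphi$ there by a reparametrised ``short'' curve (Definition~\ref{deffs}, Lemma~\ref{short->biLip}), and then proves --- this is the hard part, Lemmas~\ref{3.4} through~\ref{nowlast} --- that any short curve is automatically $C^1$ on the closed bad interval while remaining $L$-biLipschitz. The $C^1$ regularity at the endpoints is what forces the chord directions at the boundary of the bad interval to match the adjacent good directions, after which a compactness argument (Remark~\ref{C1}) makes piecewise linear approximation harmless. Without some replacement of this kind one cannot reach the sharp constant $L+\eps$: interpolation alone leaves the bad-endpoint chord angles out of control, and the $4L$ of~\cite[Lemma~5.5]{DP} is essentially the price one pays for accepting that.
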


The plain of the paper is very simple. Sections~\ref{sect2} and~\ref{sect3} contain the main ingredients of the proof, namely, the study of how one can approximate a function near Lebesgue points for $\varphi'$, and the study of how to treat the remaining small intervals. These two ingredients will be then put together in Section~\ref{sect4}, which contains the proof of Theorem~\ref{main}. Finally, Section~\ref{sect5} is devoted to generalize the result to the case of closed curves, that is, instead of functions defined on $[0,1]$ we consider functions defined on $\S^1$; this is obtained in Theorem~\ref{main2}.

\subsection{Notation}

In this paper we will use very little notation. We list the main things here for the sake of clarity. The length-measure is denoted by $\H^1$. Given two points $x,\, y\in \R^2$, we denote by $xy$ the segment joining them; depending on the situation, for the ease of notation we denote its length either by $|y-x|$, or by the quicker symbol $\segm{xy}$, or also by $\H^1(xy)$. Once a curve $\gamma:[a,b]\to\R^2$ is fixed, the arc between two points $P$ and $Q$ is denoted by $\arc{PQ}$, and its length is then $\H^1(\arc{PQ})$. In particular, for any $a\leq x<y\leq b$, $\arc{\gamma(x)\gamma(y)}$ is the arc connecting $\gamma(x)$ to $\gamma(y)$. Finally, given any three points $x,\,y,\, z\in\R^2$, we write $\angle xyz\in [0,\pi]$ to denote the angle between the segments $xy$ and $yz$.

\section{The ``Lebesgue intervals''\label{sect2}}

In this section we show that, on a small interval around a Lebesgue point for $\varphi'$, it is possible to replace the function $\varphi$ with a linear one. Since Rademacher Theorem ensures that almost every point of $[0,1]$ is a Lebesgue point for $\varphi'$, being $\varphi$ a Lipschitz function, we will be eventually able to repeat this argument on a large number of non-intersecting intervals which fill a big portion of $[0,1]$. In the end, we can prove the following result.

\begin{prop}\label{step1}
Let $\varphi:[0,1]\to \R^2$ be an $L$-biLipschitz function, and let $\eps>0$. Then there exists an $(L+\eps)$-biLipschitz function $\varphi_\eps:[0,1]\to\R^2$ such that~(\ref{claimmain}) holds true, and $\varphi_\eps$ is finitely piecewise linear on a finite union of intervals $A\subseteq [0,1]$ such that $|[0,1]\setminus A|\leq \eps$.
\end{prop}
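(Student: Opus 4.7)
The plan is to combine Rademacher's theorem and the Lebesgue differentiation theorem with a Vitali-type covering argument. Since $\varphi$ is $L$-Lipschitz, it is differentiable a.e.\ and almost every point of $[0,1]$ is a Lebesgue point of $\varphi'$; near such a point, $\varphi$ is quantitatively close to its first-order Taylor polynomial, which suggests replacing $\varphi$ by an affine segment there. Fixing parameters $\eta,\delta,\ell>0$ to be chosen later in terms of $\eps$ and $L$, I would extract via a Vitali covering finitely many pairwise disjoint closed intervals $I_i=[a_i,b_i]$, $i=1,\dots,N$, each of length at most $\ell$ and containing a Lebesgue point $x_i$ of $\varphi'$, such that $A:=\bigcup_i I_i$ satisfies $|[0,1]\setminus A|\leq\eps$. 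I would moreover impose that the endpoints $a_i,b_i$ are themselves Lebesgue points of $\varphi'$ with $|\varphi'(a_i)-v_i|+|\varphi'(b_i)-v_i|\leq \eta$ (where $v_i:=\varphi'(x_i)$), and that the averaged bound $\frac{1}{b_i-a_i}\int_{I_i}|\varphi'(t)-v_i|\,dt\leq\delta$ holds on each $I_i$. These conditions can be imposed simultaneously since $x_i$ is a density point both of the set of Lebesgue points and of $\{|\varphi'-v_i|<\eta\}$.

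On each $I_i$ I would define $\varphi_\eps$ as the unique affine function with $\varphi_\eps(a_i)=\varphi(a_i)$ and $\varphi_\eps(b_i)=\varphi(b_i)$, and set $\varphi_\eps=\varphi$ on $[0,1]\setminus A$. The resulting map is continuous, finitely piecewise linear on $A$, and coincides with $\varphi$ at $0$ and $1$. Writing $s_i:=(\varphi(b_i)-\varphi(a_i))/(b_i-a_i)$, the $L$-biLipschitz property of $\varphi$ forces $1/L\leq|s_i|\leq L$, so each affine piece is by itself $L$-biLipschitz. Moreover, for $t\in I_i$ one has
\[
\varphi_\eps(t)-\varphi(t)=\int_{a_i}^t (s_i-\varphi'(s))\,ds,
\]
and the averaged bound gives $\|\varphi_\eps-\varphi\|_{L^\infty}\leq 2\delta\ell$, so the uniform closeness in~(\ref{claimmain}) follows once $\delta\ell\leq\eps/2$.

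The $(L+\eps)$-biLipschitz inequality is the central task. The upper bound is straightforward: for $p\in I_i$, $q\in I_j$ with $i\leq j$, telescoping the displacement of $\varphi_\eps$ through the interval boundaries (at which $\varphi_\eps$ coincides with $\varphi$) and using $|s_k|\leq L$ inside intervals together with the $L$-Lipschitz property of $\varphi$ outside $A$ yields $|\varphi_\eps(p)-\varphi_\eps(q)|\leq L|p-q|$. The inverse Lipschitz bound is the main obstacle. The crude estimate
\[
|\varphi_\eps(p)-\varphi_\eps(q)|\geq|\varphi(p)-\varphi(q)|-|\varphi_\eps(p)-\varphi(p)|-|\varphi_\eps(q)-\varphi(q)|\geq\frac{|p-q|}{L}-4\delta\ell
\]
suffices when $|p-q|$ is bounded below (of order $\delta\ell L^2/\eps$) and is immediate when $p,q$ lie in the same $I_i$ or both in $[0,1]\setminus A$; the delicate remaining case is that of close pairs $p,q$ lying in distinct regions of the partition, where the absolute error $\delta\ell$ is too coarse.

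Here the refined endpoint condition enters: since $b_i$ is a Lebesgue point of $\varphi'$ with $|\varphi'(b_i)-v_i|\leq\eta$ and $|s_i-v_i|\leq\delta$, a uniform-scale argument (valid because, for fixed $\eta>0$, the set of points at which the Lebesgue inequality holds with error $\eta$ at \emph{all} scales $\leq h$ has measure tending to $1$ as $h\to 0$) yields a single $h>0$ such that on each neighborhood $(b_i-h,b_i)\subseteq I_i$ one has
\[
|\varphi_\eps(p)-\varphi(p)|=\Bigl|\int_p^{b_i}(\varphi'(s)-s_i)\,ds\Bigr|\leq C\eta(b_i-p),
\]
with an analogous bound near $a_i$. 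For a close cross-interval pair one has $b_i-p\leq|p-q|$ and symmetrically for $q$, so the error contributes $O(\eta)|p-q|$, absorbed by the slack $|p-q|\eps/(L(L+\eps))$ between $1/L$ and $1/(L+\eps)$ once $\eta$ is taken of order $\eps/L^2$. Choosing $\ell$ small enough that the threshold $4\delta\ell L(L+\eps)/\eps$ for the crude estimate is at most $h$ ensures that the two regimes overlap, completing the inverse Lipschitz bound and hence the proof.
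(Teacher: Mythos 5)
Your proof is correct in outline and reaches the same conclusion, but it follows a genuinely different route from the paper in the two places that matter most. First, the decomposition: you extract the good intervals via a Vitali covering of Lebesgue points, allowing the $I_i$ to have arbitrary (small) lengths, and you force the \emph{endpoints} $a_i,b_i$ to be themselves Lebesgue points of $\varphi'$ with derivative within $\eta$ of $v_i$ and lying in a set where the Lebesgue inequality holds uniformly at all scales $\leq h$. The paper instead chops $[0,1]$ into $N$ equal pieces, declares a piece ``admissible'' when its central part contains a point $x$ with $\bar\ell(x)>1/2N$, and then uses an explicitly constructed density-one subset $I_\ell(x)$ (Lemma~\ref{Lebesgue}) to locate the endpoints; it never requires the endpoints to be Lebesgue points. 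Second, and more substantially, the inverse-Lipschitz bound for close pairs straddling a modified interval: you control $|\varphi_\eps(p)-\varphi(p)|$ \emph{multiplicatively} by $C\eta\,(b_i-p)\leq C\eta|p-q|$ via the one-sided Lebesgue estimate at the endpoint, and then subtract two such errors; the paper instead shows (via the direction estimate in Lemma~\ref{Lebesgue}) that the angle $\angle\varphi_\eps(y)\,\varphi(t)\,\varphi_\eps(z)$ is within $2\eps$ of $\pi$, and then applies the elementary $|\cdot|\geq\cos(\theta/2)(\cdot+\cdot)$ inequality. Your version is arguably more direct analytically, at the cost of needing the uniform-scale Lebesgue set $E_h^\eta$ (which you should include explicitly as one of the constraints imposed on $a_i,b_i$ when you do the Vitali extraction, with $h$ fixed once $\eta$ is fixed, \emph{before} choosing $\delta$ and $\ell$); the paper's version avoids any condition on the pointwise value of $\varphi'$ at the cut points at the expense of a geometric lemma. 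Both handle the far regime identically, via the $L^\infty$ bound. The parameter order you sketch at the end (first $\eta\sim\eps/L^2$, then $h$, then $\delta\leq\eta$, then $\ell$ so small that $4\delta\ell L(L+\eps)/\eps\leq h$ and $\delta\ell\leq\eps/2$) closes the argument correctly.
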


As we said above, the main brick to prove this result concerns the modification of $\varphi$ on a single small interval. Before stating it, we need the following piece of notation.

\begin{definition}\label{varphist}
Let $\varphi: [0,C]\to \R^2$ be a function, and let $0\leq s < t \leq C$. We set $\varphi_{st}:[0,C]\to \R^2$ the function defined as
\[
\varphi_{st}(x):= \left\{\begin{array}{ll}
\varphi(x) & \hbox{if $x\notin (s,t)$}\,,\\
\bal\varphi(s) + \frac{x-s}{t-s} \, \big(\varphi(t)-\varphi(s)\big)\eal\qquad & \hbox{if $x\in (s,t)$}\,.
\end{array}\right.
\]
Moreover, we call $t^+=s+|\varphi(t)-\varphi(s)|/L$ and $\varphi_{st}^+:[0,C-(t-t^+)]\to \R^2$ the function
\[
\varphi_{st}^+(x):= \left\{\begin{array}{ll}
\varphi(x) & \hbox{if $x\leq s$}\,,\\
\bal\varphi(s) + \frac{L(x-s)}{|\varphi(t)-\varphi(s)|} \, \big(\varphi(t)-\varphi(s)\big)\eal\qquad & \hbox{if $s<x<t^+$}\,,\\
\varphi(x+t-t^+) & \hbox{if $t^+\leq x\leq C-(t-t^+)$}\,.
\end{array}\right.
\]
\end{definition}
In words, the function $\varphi_{st}$ coincides with $\varphi$ out of the interval $(s,t)$, while the curve $\varphi$ in $(s,t)$ is replaced by the segment connecting $\varphi(s)$ to $\varphi(t)$. The function $\varphi_{st}^+$ behaves in the very same way, except that the segment is parametrized at the (maximal possible) speed $L$.

\begin{lemma}\label{Lebesgue}
Let $\varphi:[0,1]\to \R^2$ be an $L$-biLipschitz function, and let $\eps>0$ be small enough. For any $x\in (0,1)$ which is a Lebesgue point for $\varphi'$, there exists $\bar\ell=\bar\ell(x)>0$ such that, for any $\ell\leq\bar\ell$, there is a sets $I_\ell(x)\subseteq (x-\ell,x+\ell)$ with 
\begin{align}\label{lungIpm}
\big|I_\ell(x)\big|  \geq (2-\eps) \ell\,,
\end{align}
so that for every $s<t\in I_\ell(x)$ the function $\varphi_{st}$ is $(L+\eps)$-biLipschitz and satisfies $\|\varphi-\varphi_{st}\|_{L^\infty}<\eps$. Moreover, for every $s \in I_\ell(x)$ and $t_1,\, t_2 \in (x-\ell/\eps,x+\ell/\eps)$ with $t_1<s<t_2$, the directions of the segments $\varphi(t_1)\varphi(s)$ and $\varphi(s)\varphi(t_2)$ coincide up to an error $2\eps$.
\end{lemma}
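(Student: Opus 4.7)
The plan is to linearize $\varphi$ near $x$ by the affine map $\psi(y) := \varphi(x) + (y-x)v$, where $v := \varphi'(x)$, and to take $I_\ell(x)$ to be a maximal-function sublevel set of points from which every chord of $\varphi$ landing in the enlarged interval $(x-\ell/\eps, x+\ell/\eps)$ is nearly parallel to $v$. Since $\varphi$ is $L$-biLipschitz and $\varphi'$ exists at $x$, one has $1/L \le |v| \le L$, and by the Lebesgue point property
\[
\omega(r) := \frac{1}{2r} \int_{x-r}^{x+r} |\varphi'(z) - v| \, dz \;\longrightarrow\; 0 \quad \text{as } r \to 0^+.
\]
I fix a threshold $\delta = \delta(\eps,L) > 0$ small enough that $2\delta/(|v|-\delta) \le \eps$ and $|v|-\delta \ge 1/(L+\eps)$ (both hold as soon as $\delta$ is of order $\eps/L^2$, using $|v| \ge 1/L$), and choose $\bar\ell$ so small that $(x-\bar\ell/\eps, x+\bar\ell/\eps) \subset (0,1)$, that $\omega(\bar\ell/\eps)$ is below a threshold depending on $\delta$ and $\eps$, and that $\omega(\bar\ell) \le 1/(8L(L+\eps))$.

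For $\ell \le \bar\ell$, I define
\[
I_\ell(x) := \Big\{ s \in (x-\ell, x+\ell) \;:\; \sup_{t \in (x-\ell/\eps, x+\ell/\eps),\, t \neq s} \frac{1}{|t-s|} \int_{\min(s,t)}^{\max(s,t)} |\varphi'(z) - v| \, dz \le \delta \Big\}.
\]
Estimate~\eqref{lungIpm} follows from the weak-$(1,1)$ bound for the one-sided Hardy--Littlewood maximal function of $|\varphi' - v|$ on $(x-\ell/\eps, x+\ell/\eps)$: the complement of $I_\ell(x)$ inside $(x-\ell, x+\ell)$ has measure at most $(C/\delta)(2\ell/\eps)\omega(\ell/\eps) \le \eps\ell$ by the choice of $\bar\ell$. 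The directional claim is then immediate from the definition: for $s \in I_\ell$ and $t_1 < s < t_2$ in the enlarged interval, one has $\varphi(s) - \varphi(t_1) = (s-t_1)v + E_1$ and $\varphi(t_2) - \varphi(s) = (t_2-s)v + E_2$ with $|E_i| \le \delta |t_i - s|$, and a direct estimate shows each of the two resulting unit directions differs from $v/|v|$ by at most $2\delta/(|v|-\delta) \le \eps$, hence from each other by at most $2\eps$. For the $L^\infty$ bound, writing $\eta(y) := \varphi(y) - \psi(y)$ so that $|\eta(y)| \le 2\ell\omega(\ell)$ on $(x-\ell, x+\ell)$, one expands
\[
\varphi_{st}(y) - \varphi(y) = (1-\alpha)\eta(s) + \alpha\eta(t) - \eta(y), \qquad \alpha := \tfrac{y-s}{t-s},
\]
and obtains $\|\varphi - \varphi_{st}\|_\infty \le 4\ell\omega(\ell) < \eps$.

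The heart of the argument is the $(L+\eps)$-biLipschitz property of $\varphi_{st}$. The Lipschitz upper bound $L|p-q|$ is immediate by triangle inequality in each of the four cases for the locations of $p,q$. For the inverse bound, the cases ``$p,q$ both in $(s,t)$'' (reducing to the $L$-biLipschitz inequality applied to $\varphi(s),\varphi(t)$) and ``$p,q$ both outside $(s,t)$'' (where $\varphi_{st}=\varphi$) are straightforward. The non-trivial case is $p \in (s,t)$ and $q \notin (s,t)$; by symmetry take $q \le s$, and write $\alpha = (p-s)/(t-s)$ so that
\[
\varphi_{st}(p) - \varphi_{st}(q) = (\varphi(s) - \varphi(q)) + \alpha(\varphi(t) - \varphi(s)).
\]
If $q \in (x-\ell/\eps, x+\ell/\eps)$, the defining property of $I_\ell$ applied to the two chords $\varphi(s)-\varphi(q)$ and $\varphi(t)-\varphi(s)$ gives total error at most $\delta(p-q)$, yielding $|\varphi_{st}(p) - \varphi_{st}(q)| \ge (p-q)(|v|-\delta) \ge (p-q)/(L+\eps)$ by the choice of $\delta$.

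The main obstacle is the subcase $q \notin (x-\ell/\eps, x+\ell/\eps)$, where no directional information on $\varphi(s) - \varphi(q)$ is available. The key saving observation is that here $p - q \ge s - q \ge \ell/\eps - \ell \ge \ell/(2\eps)$, so $p-q$ is much larger than $\ell$. Combining the already-established $|\varphi_{st}(p) - \varphi(p)| \le 4\ell\omega(\ell)$ with the $L$-biLipschitz inverse bound on $\varphi$,
\[
|\varphi_{st}(p) - \varphi(q)| \ge |\varphi(p) - \varphi(q)| - 4\ell\omega(\ell) \ge \tfrac{p-q}{L} - 4\ell\omega(\ell),
\]
and the gap $(p-q)\big(\tfrac{1}{L} - \tfrac{1}{L+\eps}\big) \ge \ell/(2L(L+\eps))$ dominates $4\ell\omega(\ell)$ thanks to $\omega(\bar\ell) \le 1/(8L(L+\eps))$. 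The symmetric subcase $q \ge t$ is identical, completing the proof.
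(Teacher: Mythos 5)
Your proof is correct and follows the same overall strategy as the paper: fix a Lebesgue point $x$ for $\varphi'$, remove a small bad subset of $(x-\ell,x+\ell)$ outside of which chords anchored at a point of $I_\ell(x)$ are nearly parallel to $v=\varphi'(x)$, and then verify the inverse-Lipschitz bound for $\varphi_{st}$ by splitting into the near case (directional control makes $\varphi_{st}(p)-\varphi_{st}(q)$ essentially a multiple of $v$) and the far case (the $L^\infty$ estimate is divided by a long $|p-q|\gtrsim\ell/\eps$). The one genuinely different ingredient is how the bad set is produced. The paper builds it by a recursive greedy covering: at each stage it picks a pair $(p_j,q_j)$ maximizing the still-uncovered length among pairs where the average of $|\varphi'-\varphi'(x)|$ exceeds $\eps/(2L)$, triples the interval, iterates, and then handles the possibly infinite construction by a density argument. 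You instead define $I_\ell(x)$ directly as a sublevel set of an anchored maximal function of $|\varphi'-v|$ over the enlarged interval $(x-\ell/\eps,x+\ell/\eps)$, and dispatch the measure estimate~\eqref{lungIpm} in one line via the weak-$(1,1)$ Hardy--Littlewood inequality. Your route is shorter and more modular; the paper's is longer but entirely self-contained, avoiding any appeal to the maximal-function theorem (and the two definitions of the good set encode the same information: chords from $s\in I_\ell(x)$ to any $t$ in the enlarged interval have controlled average). The remaining steps correspond with only cosmetic differences: the paper normalizes $\varphi'(x)$ to be horizontal where you compare directions to $v/|v|$; your identity $\varphi_{st}-\varphi=(1-\alpha)\eta(s)+\alpha\eta(t)-\eta(y)$ with $\eta=\varphi-\psi$ replaces the paper's direct bound $\|\varphi-\varphi_{st}\|_\infty\le 2L|t-s|$; and your threshold $\delta$ of order $\eps/L^2$ plays the role of the paper's $\eps/(2L)$ threshold on $M(p,q)$.
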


In this section we will first show Lemma~\ref{Lebesgue}, and then use it as a tool to obtain Proposition~\ref{step1}.

\proofof{Lemma~\ref{Lebesgue}}
We divide the proof in three steps for the sake of clarity.
\step{I}{The $L$-Lipschitz property of $\varphi_{st}$ and a uniform estimate for $\varphi-\varphi_{st}$.}
In this step we show that $\varphi_{st}$ is $L$-Lipschitz for any choice of $s,t$ in $[0,1]$, and we give an estimate for $\|\varphi-\varphi_{st}\|_{L^\infty}$. Let us fix $0\leq s\leq t\leq 1$ and take two arbitrary points $y\neq z\in [0,1]$: we have to check that
\[
\frac{|\varphi_{st}(y)-\varphi_{st}(z)|}{|y-z|} \leq L\,.
\]
This is clearly true if both $y,\,z \in [0,1]\setminus (s,t)$, since in this case $\varphi_{st}=\varphi$ at both $y$ and $z$; on the other hand, if both $y,\, z \in [s,t]$, then by definition
\begin{equation}\label{bothsides}
\frac{|\varphi_{st}(y)-\varphi_{st}(z)|}{|y-z|} 
=\frac{|\varphi_{st}(s)-\varphi_{st}(t)|}{|s-t|} 
=\frac{|\varphi(s)-\varphi(t)|}{|s-t|} \leq L\,.
\end{equation}
Finally, let us suppose that one of the points $y$ and $z$ belongs to $(s,t)$, and the other one to $[0,1]\setminus [s,t]$; by symmetry, we can assume $s<y<t<z$. Thus, by the above observations and by the triangular inequality we have
\[
|\varphi_{st}(y)-\varphi_{st}(z)| \leq
|\varphi_{st}(y)-\varphi_{st}(t)| + |\varphi_{st}(t)-\varphi_{st}(z)|
\leq L |y-t| + L |t-z| = L |y-z|\,.
\]
Concerning the uniform estimate, it says that
\begin{equation}\label{stimaLinfinito}
\|\varphi-\varphi_{st}\|_{L^\infty}\leq 2L |t-s|\,.
\end{equation}
Indeed, calling for brevity $d=|t-s|$, for any $x\in [s,t]$ one has $|\varphi(x)-\varphi(s) | \leq L |x-s| \leq L d$. As an immediate consequence, for any $y\in [s,t]$ we also get $|\varphi_{st}(y) - \varphi(s)| \leq Ld$, hence in turn $\|\varphi_{st}-\varphi\|_{L^\infty}\leq 2Ld$, that is, (\ref{stimaLinfinito}).

\step{II}{Definition of $\ell(x)$ and $I_\ell(x)$ and the estimate on the directions.}
Let $x$ be a Lebesgue point for $\varphi'$. By definition, for every $\delta>0$ there exists a strictly positive constant $\bar h=\bar h(x)<1/(4L)$ such that, for any $h<\bar h$,
\begin{equation}\label{Lebcond}
\intmed_{x-h}^{x+h} |\varphi'(z) - \varphi'(x)|dz < \delta\,.
\end{equation}
Let us now assume for simplicity that $\varphi'(x)$ is an horizontal vector, and for any $p<q$ in $(x-h,x+h)$, let us call $\tau_{pq}\in\S^1$ the direction of the segment $\varphi(p)\varphi(q)$. Since $|\varphi'(x)|\geq 1/L$, we immediately obtain that for any interval $(p,q)\subseteq (x-h,x+h)$ the following holds,
\begin{align}\label{predefofA}
M(p,q):=\intmed_p^q |\varphi'(z)-\varphi'(x)|\, dz < \frac \eps {2L} && \Longrightarrow && |\tau_{pq}| < \eps\,.
\end{align}
We want now to find a particular set $A\subseteq (x-3h,x+3h)$ such that
\begin{equation}\label{defofA}
M(p,q)< \frac \eps{2L} \qquad \forall\, p,\, q \in (x-h,x+h):\, (p,q)\notin A\times A\,.
\end{equation}
Notice that we are asking $M(p,q)$ to be small as soon as \emph{at least one} between $p$ and $q$ belongs to $(x-h,x+h)\setminus A$. To define $A$, let us start simply by letting $A=\emptyset$ if $M(p,q)<\eps/(2L)$ is true for every pair $p,\,q\in (x-h,x+h)$, so that~(\ref{defofA}) trivially holds.\par

Otherwise, let $p_1< q_1\in (x-h,x+h)$ be two points maximizing $\H^1(pq)$ among all the pairs for which $M(p,q)\geq \eps/(2L)$: notice that this is possible by the fact that $\varphi'$ is an $L^1$ function on the compact interval $[x-h,x+h]$. Then, let us define $I_1 =(p_1^-,q_1^+)$, being
\begin{align}\label{def-+}
p_1^- = p_1 - (q_1 - p_1) \,, && q_1^+ = q_1 + (q_1 - p_1)\,.
\end{align}
Notice that by construction $I_1\subseteq (x-3h,x+3h)$. Now, if~(\ref{defofA}) is satisfied with $A=I_1$ we stop here, otherwise let $p_2<q_2\in (x-h,x+h)$ be two points maximizing $\H^1(pq\setminus I_1)$ among the pairs for which $M(p,q)\geq \eps/(2L)$, and let $I_2=(p_2^-,q_2^+)$ where $p_2^-$ and $q_2^+$ are defined as in~(\ref{def-+}). Notice that, by definition, it is possible that $p_2$ or $q_2$ belong to $I_1$, but the intervals $(p_1,q_1)$ and $(p_2,q_2)$ are surely disjoint. Indeed, by the maximality in the definition of $p_1$ and $q_1$ we have that $\H^1(p_2q_2)\leq \H^1(p_1q_1)$; as a consequence, the intervals $(p_1,q_1)$ and $(p_2,q_2)$ could intersect only if both $p_2$ and $q_2$ belong to $I_1$: but then, $\H^1(p_2q_2\setminus I_1)=0$, against the maximality in the definition of $p_2$ and $q_2$. Moreover, as before, $I_2\subseteq (x-3h,x+3h)$. We continue our definition of the intervals $I_j$ recursively, being at any step $p_j<q_j\in (x-h,x+h)$ two points maximizing $\H^1\big(pq\setminus \cup_{i=1}^{j-1} I_i\big)$ among the pairs for which $M(p,q)\geq \eps/(2L)$, noticing that the different intervals $(p_j,q_j)$ are disjoint, and stopping the construction if $A=\cup_{i=1}^j I_i$ satisfies~(\ref{defofA}).\par

Thus, either we stop after finitely many steps, and this means that~(\ref{defofA}) holds true being $A$ a finite union of intervals, or we end up with a sequence of intervals $I_j=(p_j^-,q_j^+),\, j\in\N$. Since all the different ``internal intervals'' $(p_j,q_j)$ are disjoint, the sum of the lengths is bounded, hence $|I_j|\to 0$ when $j\to \infty$. As a consequence, we can easily check that~(\ref{defofA}) holds true by setting
\[
A:= \Bigg\{ z\in (x-3h,x+3h):\, \liminf_{\nu\to 0} \frac{|(z-\nu,z+\nu)\cap \cup_j I_j |}{2\nu}> 0\Bigg\}\,,
\]
that is, $A$ is the set of points having strictly positive density with respect to $\cup_j I_j$. To do so, let us assume the existence of $p<q \in (x-h,x+h)$ such that at least one between $p$ and $q$ does not belong to $A$, but $M(p,q)\geq \eps/(2L)$. We can immediately notice that
\begin{equation}\label{nullmeasure}
\H^1\big(pq \setminus \cup_j I_j \big)=0\,:
\end{equation}
indeed, if the above measure were some quantity $\xi>0$, then the fact that the interval $(p,q)$ was not chosen at the $j$-th step gives that
\[
\H^1\big(p_jq_j\setminus \cup_{i=1}^{j-1} I_i\big) \geq 
\H^1\big(pq\setminus \cup_{i=1}^{j-1} I_i\big) \geq
\H^1\big(pq\setminus \cup_{i\in\N} I_i\big)=\xi\,,
\]
hence in particular $|I_j|\geq \xi$ for every $j$, while we have already noticed that $|I_j|\to 0$. On the other hand, (\ref{nullmeasure}) implies that both $p$ and $q$ have at least density $1/2$ for $\cup_j I_j$, so they both belong to $A$, against the assumption. Hence, the validity of~(\ref{defofA}) has been established.\par

We define then $\ell=\tilde\eps h$ for some $\tilde\eps=\tilde\eps(\eps,L) < \eps$ to be specified later, and we set
\[
I_\ell(x) = (x-\ell,x+\ell) \setminus A\,.
\]
Keep in mind that, since $h$ is any positive constant smaller than $\bar h(x)$, then also $\ell$ can be chosen as any positive constant smaller than $\bar\ell(x)=\tilde \eps \bar h(x)$. To conclude this step, we give an estimate of the length of $A$, namely,
\[\begin{split}
|A|&\leq \sum_{j=1}^{+\infty}    |I_j|
= 3 \sum_{j=1}^{+\infty}    \H^1(p_jq_j)
\leq \frac{6L}\eps\,  \sum_{j=1}^{+\infty} \int_{p_j}^{q_j} |\varphi'(z)-\varphi'(x)| \,d z\\
&\leq  \frac{6L}\eps\, \int_{x-h}^{x+h} |\varphi'(z)-\varphi'(x)| \, d z
<\frac{12Lh \delta}\eps <h\eps\tilde\eps =\ell\eps\,,
\end{split}\]
where we have used the definition of $A$, the fact that $M(p_j,q_j)\geq \eps/(2L)$ for every $j\in\N$, the fact that all the intervals $(p_j,q_j)$ are disjoint, and~\eqref{Lebcond}, and where the last inequality holds true as soon as $\delta\leq \eps^2\tilde\eps/(12L)$. As a consequence, the validity of~(\ref{lungIpm}) follows.\par

To conclude this step, we take two points $s\in I_\ell(x)$ and $t_1,\,t_2\in (x-h,x+h)$ with $t_1<s<t_2$. Applying~(\ref{defofA}) at both pairs $(t_1,s)$ and $(s,t_2)$, and keeping in mind~(\ref{predefofA}), we get that both the segments $\varphi(t_1)\varphi(s)$ and $\varphi(s)\varphi(t_2)$ are horizontal up to an error $\eps$, thus in turn the two directions coincide up to an error $2\eps$. Notice that, since $(x-\ell/\eps,x+\ell/\eps)\subseteq (x-h,x+h)$, in particular we have proved the last assertion of the claim about the directions.

\step{III}{The bi-Lipschitz property and the $L^\infty$ estimate for $\varphi_{st}$.}
To conclude the proof we only have to check that, whenever $x$ is a Lebesgue point for $\varphi'$ and the points $s$ and $t$ are in $I_\ell(x)$, the function $\varphi_{st}$ is $(L+\eps)$-biLipschitz and satisfies $\|\varphi-\varphi_{st}\|_{L^\infty} < \eps$.\par

The $L^\infty$ estimate comes directly by Step~I, keeping in mind~(\ref{stimaLinfinito}) and since by construction $2L|t-s| \leq 4\ell L < 4\eps h L < \eps$; moreover, Step~I ensures also the Lipschitz property, even with constant $L$ instead of $(L+\eps)$: as a consequence, we only have to take care of the inverse Lipschitz inequality. In other words, we take $y,\, z\in [0,1]$ and we have to check that
\begin{equation}\label{biLip}
|\varphi_{st}(y)-\varphi_{st}(z) | \geq \frac {|y-z|}{L+\eps} \,.
\end{equation}
If $y$ and $z$ are both in $[0,1]\setminus (s,t)$, then~(\ref{biLip}) is true --with $L$ in place of $L+\eps$-- because $\varphi=\varphi_{st}$ at both $y$ and $z$, while if they are both in $[s,t]$ then the validity of~(\ref{biLip}) --again with $L$ in place of $L+\eps$-- can be obtained exactly as in~(\ref{bothsides}). Without loss of generality, let us then consider the case when $s<y<t<z$, which we further subdivide in two cases.\par

If $z \in (x-h,x+h)$ then, as observed at the end of Step~II, the angle $\theta=\angle{\varphi(y)}{\varphi(t)}{\varphi(z)}$ is at most $2\eps$. Recalling that the validity of~(\ref{biLip}) with $L$ in place of $L+\eps$ is already known for both the pairs $(y,t)$ and $(t,z)$, we have then
\[\begin{split}
|\varphi_{st}(y)-\varphi_{st}(z)| &\geq \cos(\theta/2) \big(|\varphi_{st}(y)-\varphi_{st}(t)|+|\varphi_{st}(t)-\varphi_{st}(z)|\big)\\
&\geq \cos \eps \, \frac{|y-t|+|t-z|}L
\geq \frac{|y-z|}{L+\eps}\,,
\end{split}\]
which is valid up to take $\eps$ small enough.\par

Finally, assume that $z>x+h$: in this case it is enough to observe that, also by~(\ref{stimaLinfinito}),
\[\begin{split}
\frac{|\varphi_{st}(y)-\varphi_{st}(z)|}{|y-z|}&=
\frac{|\varphi_{st}(y)-\varphi(z)|}{|y-z|}
\geq \frac{|\varphi(y)-\varphi(z)|}{|y-z|} - \frac{\|\varphi_{st}-\varphi\|_{L^\infty}}{|y-z|}\\
&\geq \frac 1L - \frac{4 L \ell}{h-\ell}
= \frac 1L - \frac{4 L \tilde\eps}{1-\tilde\eps}
> \frac 1{L+\eps}\,,
\end{split}\]
up to have chosen $\tilde\eps=\tilde\eps(\eps, L)$ small enough. Thus, the estimate~(\ref{biLip}) has been proved in any case and the proof is concluded.
\end{proof}

\begin{definition}
Given an interval $J=(a,b)\subseteq [0,1]$ and $\eps>0$, we call \emph{central part of $J$} the interval $J^\eps$ given by
\[
J^\eps := \bigg(\frac{a+b}2 - \frac \eps 2\,(b-a), \frac{a+b}2 + \frac \eps 2\,(b-a)\bigg)\,.
\]
Moreover, we say that $J$ is \emph{$\eps$-admissible} if there exists $x\in J^\eps$ such that $\lbar(x) > (b-a)/2$.
\end{definition}

\proofof{Proposition~\ref{step1}}
For any $N\in\N$, we write $[0,1]$ as the essentially disjoint union of the intervals $J_m = \big(m/N, (m+1)/N\big)$, with $0\leq m < N$. Moreover, we let $\tilde \eps=\tilde\eps(\eps,L)<\eps$ be a small constant, to be specified later. We split the proof in three steps for clarity.

\step{I}{A piecewise linear $(L+\tilde\eps)$-biLipschitz function $\varphi_m$ on each $\tilde\eps$-admissible interval.}
Let us start by considering an interval $J_m$ which is $\tilde\eps$-admissible. Then, there exists a Lebesgue point $x_m\in J_m^{\tilde\eps}$ for $\varphi'$ satisfying $\lbar(x_m)>1/2N$. Let now $\ell= {\rm dist} (x_m,[0,1]\setminus J_m)$: of course $\ell \leq 1/2N<\bar\ell(x_m)$, hence we can apply Lemma~\ref{Lebesgue} with $\tilde\eps$ in place of $\eps$, and get two points
\begin{align*}
x_m^- \in \bigg(\frac m N, \frac mN+\frac{2\tilde\eps}N\bigg)\,, && 
x_m^+ \in \bigg(\frac {m+1}N-\frac{2\tilde\eps}N,\frac {m+1}N \bigg)
\end{align*}
such that the function $\varphi_m=\varphi_{x_m^-x_m^+}$ of Definition~\ref{varphist} is $(L+\tilde\eps)$-biLipschitz and satisfies $\|\varphi-\varphi_m\|_{L^\infty}< \tilde\eps<\eps$. Notice that $\varphi_m$ is piecewise linear on a subset of $J_m$ having length at least $(1-4\tilde\eps)/N$. We underline now another $L^\infty$ estimate which holds for $\varphi-\varphi_m$, which will be needed later; namely, since $\varphi$ and $\varphi_m$ are bi-Lipschitz and they coincide at $x_m^-$, then for any $y\in (x_m^-,x_m^+)$ we have
\begin{equation}\label{betterest}
|\varphi_m(y)-\varphi(y)| \leq |\varphi_m(y)-\varphi_m(x_m^-)| + |\varphi(x_m^-)-\varphi(y)|
\leq (2L+\tilde\eps)(y-x_m^-) < \frac{3L}N\,.
\end{equation}

\step{II}{The length of the non $\tilde\eps$-admissible intervals $J_m$ is small.}
Let us consider an interval $J_m$ which is not $\tilde\eps$-admissible. By definition, this means that no Lebesgue point $x$ in $J_m^{\tilde\eps}$ satisfies $\lbar(x)>1/2N$, or equivalently that $J_m^{\tilde\eps}$ is entirely contained in
\[
A_N=\bigg\{x\in [0,1]:\, \hbox{either $x$ is not a Lebegue point for $\varphi'$, or } \lbar(x)\leq \frac 1{2N}\bigg\}\,.
\]
As a consequence, the union of the intervals which are not $\tilde\eps$-admissible has length at most $|A_N|/\tilde\eps$: hence, since $\eps>0$ is fixed and since $\tilde\eps$ will ultimately depend only on $\eps$ and $L$, by Rademacher Theorem we can select $N\gg 1$ such that this union is as small as we wish.

\step{III}{Definition of the function $\varphi_{\eps}$.}
We are now in position to define the desired function $\varphi_\eps$. More precisely, we let $\varphi_\eps=\varphi_m$ in every $\tilde\eps$-admissible interval $J_m$, and $\varphi_\eps=\varphi$ on the other intervals; thus, $\varphi_\eps$ coincides with $\varphi$ on every interval which is not $\tilde\eps$-admissible, as well as in the ``external'' portion $J_m\setminus (x_m^-,x_m^+)$ of the $\tilde\eps$-admissible intervals.\par

First of all, observe that $\varphi_\eps$ is piecewise linear on the union of the intervals $(x_m^-,x_m^+)$, hence --by Steps~I and~II-- on a portion of $[0,1]$ having measure larger than $1-5\tilde\eps$ (thus in turn larger than $1-\eps$ if $\tilde\eps<\eps/5$) as soon as $N\gg 1$.\par

Second, by construction we have $\varphi_\eps(0)=\varphi(0)$ and $\varphi_\eps(1)=\varphi(1)$; moreover, since in every $\tilde\eps$-admissible interval $J_m$ one has $\|\varphi-\varphi_\eps\|_{L^\infty}=\|\varphi-\varphi_m\|_{L^\infty}<\tilde\eps$, while on each non $\tilde\eps$-admissible interval one has $\varphi_\eps=\varphi$, the $L^\infty$ estimate and thus the whole (\ref{claimmain}) has been established.\par

To conclude, we have only to check the $(L+\eps)$-biLipschitz property of $\varphi_\eps$. To do so, having fixed two points $y<z$ in $[0,1]$, we need to show that
\begin{align}\label{doubleineq}
|\varphi_\eps(y) -\varphi_\eps(z)| \leq (L+\eps) |y-z|\,, && |\varphi_\eps(y) -\varphi_\eps(z)| \geq \frac 1{L+\eps}\, |y-z|\,.
\end{align}
Since $\varphi$ is $L$-biLipschitz by assumption, and every $\varphi_m$ is $(L+\tilde\eps)$-biLipschitz by Step~I, there is nothing to prove unless $y \in (x_m^-,\, x_m^+)$ and $z\in (x_n^-,x_n^+)$ for some $m<n$, being both the intervals $J_m$ and $J_n$ $\tilde\eps$-admissible. In this case, the first inequality in~(\ref{doubleineq}) comes directly by the triangular inequality, being
\[\begin{split}
|\varphi_{\eps}(y) - \varphi_{\eps}(z)| &\leq |\varphi_{\eps}(y) - \varphi_{\eps}(x_{m}^{+})| + |\varphi_{\eps}(x_{m}^{+}) - \varphi_{\eps}(x_{n}^{-})| + |\varphi_{\eps}(x_{n}^{-}) - \varphi_{\eps}(z)| \\
&= |\varphi_m(y) - \varphi_m(x_{m}^{+})| + |\varphi(x_{m}^{+}) - \varphi(x_{n}^{-})| + |\varphi_n(x_{n}^{-}) - \varphi_n(z)|\\
&\leq (L+\tilde\eps) | y- z |
\leq (L+\eps) | y- z |\,.
\end{split}\]
To show the other inequality, it is convenient to distinguish two subcases, namely, whether $y$ and $z$ are very close, or not. More precisely, let us first assume that $z<x_m + 1/(2N\tilde\eps)<x_m+\bar\ell(x_m)/\tilde\eps$; in this case, by Lemma~\ref{Lebesgue} we know that the angle $\theta=\angle{\varphi_{\eps}(y)}{\varphi_{\eps}(x_{m}^{+})}{\varphi_{\eps}(z)}$ satisfies $\theta> \pi-2\tilde\eps$, so that as soon as $\tilde\eps=\tilde\eps(\eps,L)$ is small enough we have
\[
|\varphi_{\eps}(y) - \varphi_{\eps}(z)| \geq \cos(\tilde\eps)\big( |\varphi_{\eps}(y) - \varphi_{\eps}(x_{m}^{+})|  + |\varphi_{\eps}(x_{m}^{+}) - \varphi_{\eps}(z)|\big)
\geq \frac{\cos(\tilde\eps)}{L+\tilde\eps}\, | y- z |
\geq \frac 1{L+\eps}\, | y- z |\,.
\]
Finally, if $z\geq x_m+1/(2N\tilde\eps)$, then of course $|y-z|\geq 1/(3N\tilde\eps)$. As a consequence, since by~(\ref{betterest}) we have $\|\varphi-\varphi_\eps\|_{L^\infty}<3L/N$, we get
\[
\frac{|\varphi_{\eps}(y) - \varphi_{\eps}(z)|}{| y-z |} 
\geq \frac{|\varphi(y) - \varphi(z)|}{| y-z |} - \frac{2 \|\varphi-\varphi_\eps\|_{L^\infty}}{| y-z |}
\geq \frac 1L - 18L\tilde\eps \geq \frac 1{L+\eps}\,,
\]
where the last inequality is again true for a suitable choice of $\tilde\eps=\tilde\eps(\eps,L)$. The second inequality in~(\ref{doubleineq}) is thus proved in any case, and the proof is concluded.
\end{proof}

\begin{remark}\label{C1}
Notice that, if the function $\varphi$ is ${\rm C}^1$ up to the boundary on the interval $[0,1]$, then Lemma~\ref{Lebesgue} can be applied to any point of $[0,1]$, thus by a trivial compactness argument the proof of Proposition~\ref{step1} can be modified to get an $(L+\eps)$-biLipschitz approximation of $\varphi$ which is finitely piecewise linear on the whole $[0,1]$.
\end{remark}

\section{The ``non Lebesgue intervals''\label{sect3}}

In this section we show that any $L$-biLipschitz function $\varphi$ can be modified inside any small interval $(a,b)$, shrinking a little bit this interval, becoming ${\rm C}^1$ there, and remaining globally $L$-biLipschitz. In the next section we will apply this result to the ``non Lebesgue intervals'', that is, the intervals which we were not able to treat in the last section. The main aim of the section is to prove the following result.

\begin{prop}\label{nLI}
Let $\varphi:[0,C]\to \R^2$ be an $L$-biLipschitz function, let $[a,b]\subseteq [0,C]$ be a given interval, and suppose that for some $\eps>0$ the function $\varphi$ is linear on $(a-\eps,a)\cap [0,C]$ and on $(b,b+\eps)\cap [0,C]$, with $|\varphi'|=L$ on both these intervals. Then, there exists $a+(b-a)/L^2\leq b'\leq b$ and an $L$-biLipschitz function $\psi:[0,C-(b-b')]\to \R^2$ which is ${\rm C}^1$ on $[a,b']$ and satisfies
\begin{equation}\label{propprel}\left\{
\begin{array}{ll}
\psi(t) = \varphi(t) &\hbox{for every $0\leq t\leq a$}\,,\\
\psi(t) = \varphi(t+b-b') \qquad &\hbox{for every $b'\leq t\leq C-(b-b')$}\,.
\end{array}
\right.\end{equation}
\end{prop}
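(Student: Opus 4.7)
\medskip
\noindent\emph{Proof plan.} By a rigid motion of the target, I assume $\varphi(a)=0$ and $\varphi(b)=De_1$ with $D:=|\varphi(b)-\varphi(a)|$; write $Lu_a:=\varphi'(a^-)$ and $Lu_b:=\varphi'(b^+)$, so that $u_a,u_b\in\S^1$ are well-defined by the local-linearity hypothesis, and note $(b-a)/L\le D\le L(b-a)$. The plan is to set $\psi(t)=\varphi(t)$ on $[0,a]$, $\psi(t)=\varphi(t+b-b')$ on $[b',C-(b-b')]$, and to build $\psi$ on $[a,b']$ as a $C^1$ curve traced at constant speed $L$; writing $\ell$ for the length of this piece, one has $b'=a+\ell/L$, and demanding $\ell\in[D,L(b-a)]$ forces $b'\in[a+D/L,b]\subseteq[a+(b-a)/L^2,b]$.

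\medskip
\noindent The geometric core is thus to produce a $C^1$ path $\Gamma$ from $\varphi(a)$ to $\varphi(b)$ of length $\ell$, with unit tangent $u_a$ at the start and $u_b$ at the end. A natural candidate is a \emph{smoothed chord}: a small circular arc tangent to $u_a$ at $\varphi(a)$ that bends into direction $e_1$, a straight segment along $e_1$, and a small arc bending from $e_1$ into $u_b$ and arriving at $\varphi(b)$. The two radii and the middle-segment length give enough degrees of freedom, via a continuity argument, to hit any prescribed total length $\ell$ in an interval extending up to $L(b-a)$. When $u_a$ or $u_b$ makes an angle close to or exceeding $\pi/2$ with $e_1$, a single circular arc at that end has too small a chord-to-arc ratio and must be replaced by several smaller arcs joined in a $C^1$ fashion, or by a more slowly turning curve.

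\medskip
\noindent I would then verify that the glued function $\psi$ is $L$-biLipschitz by cases. For $s,t\in[a,b']$, $|\psi'|=L$ gives the Lipschitz inequality for free, while $|\psi(s)-\psi(t)|\ge|s-t|/L$ is the assertion that $\Gamma$ has chord-to-arc ratio at least $1/L^2$ on every sub-arc, which is ensured by construction. For $s\in[0,a]$ and $t\in[b',C-(b-b')]$ on opposite sides, setting $t':=t+b-b'\ge b$, the inverse inequality reduces to
\[
|\psi(s)-\psi(t)|=|\varphi(s)-\varphi(t')|\ge\frac{t'-s}{L}\ge\frac{t-s}{L},
\]
while the Lipschitz inequality uses the triangular estimate through $\varphi(a)$ and $\varphi(b)$,
\[
|\varphi(s)-\varphi(t')|\le L(a-s)+D+L(t'-b)=L(t-s)+D-L(b'-a)\le L(t-s),
\]
which holds exactly because $b'-a\ge D/L$. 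For mixed pairs (one endpoint in $[a,b']$, one outside) the tangent matching $\psi'(a^+)=Lu_a=\varphi'(a^-)$ (and analogously at $b'$) makes $\psi$ globally $C^1$ across $a$ and $b'$, so that the inverse Lipschitz bound follows from the same chord-to-arc estimate extended across the junction.

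\medskip
\noindent The hardest part is the construction of $\Gamma$ itself: one must simultaneously match both endpoint tangents, hit a prescribed length $\ell$, and keep the chord-to-arc ratio $\ge 1/L^2$ on \emph{all} sub-arcs. The naive smoothed-chord construction with one arc on each side breaks down when $u_a$ or $u_b$ points nearly opposite to the chord direction, so that the curve is forced to perform something like a U-turn; in such cases a multi-piece construction is needed, and the assumption that $\varphi$ is linear at full speed $L$ on both sides of $[a,b]$ (which, via the $L$-biLipschitz property of $\varphi$, rules out the most pathological configurations of $u_a,u_b$) is presumably what makes the construction possible.
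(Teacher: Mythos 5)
Your approach is genuinely different from the paper's: you try to \emph{construct} an explicit $C^1$ curve $\Gamma$ (a smoothed chord built from circular arcs and segments) matching the endpoint tangents, whereas the paper proceeds \emph{variationally}. The paper defines a curve $\psi$ to be \emph{fast} on $[a,b]$ if it replaces the arc by a constant-speed-$L$ piece that satisfies the biLipschitz inequality for all pairs with exactly one point in $[a,b']$, shows this class is nonempty (reparametrize $\varphi$ itself at speed $L$) and closed under uniform limits, and then takes a \emph{short} curve, minimizing $b'$; the minimality, via a careful geometric analysis (Lemmas~\ref{smallone} and~\ref{bigone}), forces the tangent direction to vary continuously, yielding $C^1$ regularity, while the biLipschitz property for pairs both inside $[a,b']$ drops out of minimality as well. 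Nothing is constructed by hand.

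There are two genuine gaps in your plan, and they are exactly the two places the paper's variational machinery is doing the real work.

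First, the construction of $\Gamma$ is not carried out, and the hand-wave at the end will not close easily. You correctly identify that the endpoint directions $u_a,u_b$ may point nearly backwards relative to the chord, and that the biLipschitz constraints on $\varphi$ near $a$ and $b$ restrict how adversarial $(u_a,u_b,D,b-a)$ can be. But these constraints are quantitative and interact with the prescribed total length $\ell$ and with the sub-arc chord-to-length bound $\ge 1/L^2$ simultaneously; producing a curve that satisfies all three for \emph{every} admissible configuration is essentially a new problem, and the multi-piece version you gesture at is not described. The paper avoids this entirely: the shortest fast reparametrization of $\varphi$ \emph{is} the curve, and no explicit shape is ever prescribed.

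Second, and more seriously, the mixed-pair inverse Lipschitz estimate does not follow from $C^1$ matching at the junctions, as you claim. Take $s<a$ and $t\in(a,b')$ with $t-s$ of order the size of $[a,b]$. The sub-arc chord-to-length bound only compares points of $\Gamma$ with each other, and $C^1$ matching only controls the behavior infinitesimally near $t=a$; neither prevents a point of $\Gamma$ in the interior from coming close to some $\varphi(s)$ with $s$ well to the left of $a$. In the paper's framework this is precisely the constraint built into the definition of a fast function, see~\eqref{mildbiLip}: it is imposed as a hard requirement, shown to be satisfiable (the reparametrized $\varphi$ satisfies it), and preserved under the minimization. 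You would need to impose and verify the analogous bound for your $\Gamma$ against the \emph{arbitrary} (merely $L$-biLipschitz) tails of $\varphi$, which the smoothed-chord picture gives you no handle on.

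In short, the proposal correctly reduces the problem to finding a $C^1$ interpolating curve with a chord-to-arc bound and the mixed biLipschitz compatibility, but both reductions are stated without proof, and the second is waved past with an argument ($C^1$ matching at junctions) that is not sufficient. The paper's ``short curve'' device is exactly what lets one avoid constructing anything: minimality substitutes for an explicit formula, and both regularity and the biLipschitz bounds are extracted from the impossibility of shortening further (Lemma~\ref{ifnotshort} is the engine).
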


To obtain this result, the following two definitions will be useful.
\begin{definition}[Fast and short functions]\label{deffs}
Let $\varphi:[0,C]\to\R^2$ be an $L$-biLipschitz function and $[a,b]\subseteq [0,C]$ be a given interval. We say that a function $\psi:[0,C-(b-b')]\to \R^2$ is \emph{fast on $[a,b]$} if $a+(b-a)/L^2\leq b'\leq b$, $\psi$ satisfies~(\ref{propprel}),
\begin{align}\label{mildbiLip}
\frac 1L\, |z-y| \leq  |\psi(z)-\psi(y)| \leq L |z-y|
&& \forall\, y \in [a,b']\,, z \notin  [a,b'] \,,
\end{align}
and $|\psi'|\equiv L$ on $[a,b']$. Moreover, any $\psi$ which minimizes the value of $b'$ among all the functions fast on $[a,b]$, is said to be \emph{short on $[a,b]$}.
\end{definition}

In words, a ``fast'' function is a function which connects $\varphi(a)$ with $\varphi(b)$ always moving at maximal speed, and satisfying~(\ref{mildbiLip}),  while a ``short'' function is the shortest possible fast function. Let us immediately make a very simple observation, which we will use often later.
\begin{lemma}\label{ifnotshort}
Let $\varphi:[0,C]\to\R^2$ be an $L$-biLipschitz function, and let $\psi:[0,C-(b-b')]\to\R^2$ be short on some interval $[a,b]\subseteq [0,C]$. Let also $a\leq r < s \leq b'$, and assume that $\psi$ is not a straight line between $\psi(r)$ and $\psi(s)$. Then, the inverse $L$-Lipschitz property for the function $\psi_{rs}^+$ fails for some $p\notin [a,b'-(s-s^+)]$ and $q\in (r,s^+)$, where $\psi_{rs}^+$ and $s^+$ are as in Definition~\ref{varphist}.
\end{lemma}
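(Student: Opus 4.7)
The plan is to argue by contradiction. Assume that the inverse $L$-Lipschitz inequality $|\psi_{rs}^+(p) - \psi_{rs}^+(q)| \ge |p-q|/L$ holds for \emph{every} pair with $p \notin [a, b'']$ and $q \in (r, s^+)$, where $b'' := b' - (s - s^+)$. The goal is to show that under this hypothesis $\psi_{rs}^+$ is itself fast on $[a, b]$ with parameter $b''$. Since the hypothesis that $\psi$ is not a straight line between $\psi(r)$ and $\psi(s)$, combined with $|\psi'| \equiv L$ on $[a,b']$, forces $|\psi(s) - \psi(r)| < L(s - r)$, we have $s^+ < s$ and therefore $b'' < b'$; the existence of such a fast function with a strictly smaller parameter then contradicts the minimality of $b'$, that is, the shortness of $\psi$.

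Verifying that $\psi_{rs}^+$ is fast reduces to checking the four conditions of Definition~\ref{deffs} with $b'$ replaced by $b''$. The boundary condition~(\ref{propprel}) follows from the corresponding property of $\psi$ together with the identity $b - b'' = (b - b') + (s - s^+)$; the speed identity $|(\psi_{rs}^+)'| \equiv L$ on $[a, b'']$ is inherited from $\psi$ on $[a, r] \cup [s^+, b']$ and holds by construction on the linear piece $(r, s^+)$; the upper Lipschitz bound in~(\ref{mildbiLip}) is a standard triangle-inequality computation over the three pieces. For the lower bound in~(\ref{mildbiLip}), when $y \in [a, r] \cup [s^+, b'']$ and $z \notin [a, b'']$, one rewrites $\psi_{rs}^+$ back in terms of $\psi$ via the shift by $s - s^+$ on the portion to the right of the cut, and the estimate reduces to the mild biLipschitz property of $\psi$ itself; when $y \in (r, s^+)$ and $z \notin [a, b'']$, the needed inequality is exactly our contradiction assumption.

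The only step that initially looks potentially problematic is the admissibility constraint $a + (b-a)/L^2 \le b''$. This turns out to follow cleanly from the biLipschitz property of $\varphi$: since $\psi_{rs}^+$ has speed $L$ on $[a, b'']$, the length of its image over this interval is exactly $L(b'' - a)$, and this length dominates the chord $|\psi_{rs}^+(b'') - \psi_{rs}^+(a)| = |\varphi(b) - \varphi(a)| \ge (b-a)/L$, yielding $b'' - a \ge (b-a)/L^2$. I expect the mild bookkeeping needed to correctly match up domain shifts in the lower-bound case analysis to be the only delicate part of the argument; once that is in place, the contradiction with shortness of $\psi$ closes the proof.
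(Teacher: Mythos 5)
Your proof is correct and takes essentially the same approach as the paper. The paper argues in the forward direction: since $\psi$ is short and $b''<b'$, the modified function $\psi_{rs}^+$ cannot be fast, so some pair must violate the inverse $L$-Lipschitz inequality in~(\ref{mildbiLip}), and then the same two computations you sketch (for $q\in[a,r]$ and for $q\in[s^+,b'']$, using the shift by $s-s^+$ in the latter) rule out every $q\notin(r,s^+)$. You run the contrapositive of the same argument (assume no failure in $(r,s^+)$, deduce fastness, contradict minimality), which is logically equivalent. One small point in your favor: you explicitly verify the admissibility constraint $b''\geq a+(b-a)/L^2$ via the chord-versus-arclength bound, a condition the paper leaves implicit when it asserts that $\psi_{rs}^+$ ``cannot be fast'' must mean~(\ref{mildbiLip}) fails.
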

\begin{proof}
Let us consider the function $\psi_{rs}^+:[0,C-(b-b'')]\to \R^2$, with $b''=b'-(s-s^+)$, which of course satisfies~(\ref{propprel}). Since $\psi$ is not a straight line between $\psi(r)$ and $\psi(s)$, we have that $b''<b'$ and then, since $\psi$ is short on $(a,b)$, by definition we get that $\psi_{rs}^+$ cannot be fast on $(a,b)$. As a consequence, recalling~(\ref{mildbiLip}), we know that there must be some $p\notin [a,b'']$ and some $q\in [a,b'']$ such that the $L$-biLipschitz property for $\psi_{rs}^+$ fails at $p$ and $q$. However, we know that $\big|(\psi_{rs}^+)'\big|=L$ in $(a,b'')$, while outside $\big|(\psi_{rs}^+)'\big|\leq L$ since $\psi_{rs}^+$ coincides with $\varphi$ up to a translation of the variable, and $\varphi$ is $L$-biLipschitz. Thus, the $L$-Lipschitz property for $\psi_{rs}^+$ cannot fail, and we realize that the inverse $L$-Lipschitz property must fail at $p$ and $q$. By symmetry, we can also assume that $p<a$; hence, if $a\leq q \leq r$, then
\[
\frac{|\psi_{rs}^+(q)-\psi_{rs}^+(p)|}{q-p} = 
\frac{|\psi(q)-\psi(p)|}{q-p} \geq \frac 1L\,,
\]
because the function $\psi$ is short and then in particular it satisfies~(\ref{mildbiLip}). Instead, if $s^+\leq q \leq b''$, then we have
\[\begin{split}
\frac{|\psi_{rs}^+(q)-\psi_{rs}^+(p)|}{q-p} &= 
\frac{|\psi(q+(s-s^+))-\psi(p)|}{q-p}
=\frac{|\psi(q+(s-s^+))-\psi(p)|}{q+(s-s^+)-p}\,\frac{q-p+(s-s^+)}{q-p}\\
&\geq\frac{|\psi(q+(s-s^+))-\psi(p)|}{q+(s-s^+)-p}
\geq \frac 1L\,.
\end{split}\]
As a consequence, we obtain that $q$ must be in $(r,s^+)$, and the thesis is concluded.
\end{proof}

Our next result tells that a short function always exists, and it is even $L$-biLipschitz: notice that this is not guaranteed by~(\ref{mildbiLip}), since there we check only some pairs $(y,z)$, namely, those for which $y$ is inside the interval $(a,b')$ and $z$ is outside it.

\begin{lemma}\label{short->biLip}
Let $\varphi:[0,C]\to \R^2$ be an $L$-biLipschitz function, and let $[a,b]\subseteq [0,C]$ be a given interval. Then, there exists a function $\psi$ short on $[a,b]$, and any such function is $L$-biLipschitz.
\end{lemma}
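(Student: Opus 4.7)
The plan has two parts: existence of a short function via compactness, and the biLipschitz property via Lemma~\ref{ifnotshort} together with a polarization identity.

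For existence I would first exhibit one fast function, so that the class is nonempty. A clean choice is to reparametrize the image curve $\varphi([a,b])$ at constant speed $L$: this gives $b'-a$ equal to the arc-length of $\varphi|_{[a,b]}$ divided by $L$ (in particular $b'\ge a+(b-a)/L^{2}$), and the mild estimate~(\ref{mildbiLip}) follows from the standard chord-vs-arc-length bound applied to the natural concatenated path through $\varphi(a)$ (or $\varphi(b)$). Then I would pick a minimizing sequence $\psi_{n}$ with $b'_{n}\searrow b'_{\ast}$, bring the functions to a common interval by trivial extension through $\varphi$, and invoke Arzelà-Ascoli (all $\psi_n$ are uniformly $L$-Lipschitz). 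A subsequential limit $\psi_{\ast}$ satisfies~(\ref{propprel}), has $|\psi_{\ast}'|=L$ on $[a,b'_{\ast}]$, and inherits~(\ref{mildbiLip}) by passing to the limit; hence it is fast and realizes the infimum.

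For the biLipschitz property, let $\psi$ be short. The cases where at least one of the two points lies outside $[a,b']$ are straightforward: the mixed case is exactly~(\ref{mildbiLip}); when both points lie in $[0,a]$ or both in $[b',C-(b-b')]$, the biLipschitz property of $\varphi$ applies directly; when they lie on opposite sides of $[a,b']$, the Lipschitz direction comes from triangulating through $a$ and $b'$ together with $|\psi'|\le L$, and the inverse Lipschitz direction from the biLipschitz property of $\varphi$ on the concatenated parameter. When both $y<z$ lie in $[a,b']$ the upper bound is immediate from $|\psi'|=L$, so the only remaining (and main) obstacle is the lower bound in this last case.

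Here I would argue by contradiction: assume $|\psi(z)-\psi(y)|<(z-y)/L$ for some $a\le y<z\le b'$. Then $\psi$ cannot be a straight segment between $\psi(y)$ and $\psi(z)$, so Lemma~\ref{ifnotshort} supplies $p\notin[a,b'']$ and $q\in(y,z^{+})$ with $|\psi_{yz}^{+}(q)-\psi_{yz}^{+}(p)|<|q-p|/L$. To turn this into a contradiction I would apply the polarization identity to $Y=\psi(y)$, $Z=\psi(z)$, $P=\psi_{yz}^{+}(p)$, and $Q=\psi_{yz}^{+}(q)=(1-\lambda)Y+\lambda Z$, where $\lambda=(q-y)/(z^{+}-y)\in(0,1)$:
\[
|PQ|^{2}=(1-\lambda)|PY|^{2}+\lambda|PZ|^{2}-\lambda(1-\lambda)|YZ|^{2}.
\]
For $p<a$ (the case $p>b''$ being symmetric after the shift $p\mapsto p+z-z^{+}$), (\ref{mildbiLip}) for $\psi$ gives $|PY|\ge(y-p)/L$ and $|PZ|\ge(z-p)/L$, while by assumption $|YZ|<(z-y)/L$. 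Substituting and simplifying, the right-hand side reduces to $[(y-p)+\lambda(z-y)]^{2}/L^{2}$, which strictly exceeds $[(y-p)+\lambda(z^{+}-y)]^{2}/L^{2}=(q-p)^{2}/L^{2}$ because $z>z^{+}$. Hence $|PQ|>|q-p|/L$, contradicting Lemma~\ref{ifnotshort}. The main difficulty is precisely this quantitative step: the polarization identity is what turns the non-straightness gap $z-z^{+}>0$ into the required strict improvement, which the purely qualitative conclusion of Lemma~\ref{ifnotshort} would not by itself provide.
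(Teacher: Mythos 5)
Your proof is correct, and both parts follow essentially the same route as the paper. The existence argument (reparametrize $\varphi|_{[a,b]}$ at constant speed $L$ to exhibit one fast function, then pass to a minimizing sequence via Arzel\`a--Ascoli) is identical, and so is the reduction of the biLipschitz property to the single hard case $a<y<z<b'$, handled by contradiction through Lemma~\ref{ifnotshort}. The one genuine difference is the final quantitative step. The paper derives the contradiction by a chain of triangle inequalities anchored at one endpoint: from $\tfrac1L|s-p|\le|\psi(s)-\psi(p)|\le|\psi(s)-\psi_{rs}^+(q)|+|\psi_{rs}^+(q)-\psi_{rs}^+(p)|$, the collinearity of $\psi_{rs}^+(q)$ with $\psi(r),\psi(s)$ turns the first summand into $|\psi(s)-\psi(r)|-L(q-r)$, and then the elementary bound $L(q-r)\ge\tfrac1L(q-r)$ closes the loop. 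You instead invoke the polarization (Stewart) identity for the cevian $PQ$ in the degenerate triangle $PYZ$; after plugging in the two lower bounds from~(\ref{mildbiLip}) and the assumed strict upper bound on $|YZ|$, the right-hand side collapses to the perfect square $[(y-p)+\lambda(z-y)]^2/L^2$, and the gap $z>z^+$ produced by the non-straightness finishes the job. Both computations exploit exactly the same geometric fact (that $Q$ lies on the segment $YZ$) and both contradict the strict inequality furnished by Lemma~\ref{ifnotshort}; yours is more symmetric in the two endpoints and packages the strictness neatly into a single squared expression, while the paper's is a slightly shorter triangle-inequality chain that avoids squaring. Either is perfectly acceptable.
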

\begin{proof}
First of all, let us observe that the set of the fast functions is not empty. Indeed, the function $\varphi$ itself, reparametrized at speed $L$ in $(a,b)$, is fast: more precisely, let us set
\[
b' = a+  \frac{\H^1\big(\arc{\varphi(a)\varphi(b)}\big)}L\,,
\]
let $\sigma:[0,C]\to [0,C-(b-b')]$ be the one-to-one function given by
\[
\sigma(t) = \left\{
\begin{array}{ll}
t &\forall\, 0\leq t\leq a\,,\\
\bal a+\frac{\H^1\big(\arc{\varphi(a)\varphi(t)}\big)}L \eal \quad &\forall\,a<t<b\,,\\
t-(b-b') & \forall\, b\leq t\leq C\,,
\end{array}\right.
\]
and set $\psi_1$ as $\psi_1(\sigma(t))=\varphi(t)$. We claim that $\psi_1$ is a fast function on $[a,b]$: everything is obvious by construction except the validity of~(\ref{mildbiLip}). But in fact, let $y\in (a,b')$ and $z>b'$ (if $z<a$, the very same argument applies). Since $|\psi_1'(t)|=L$ for $t\in (y,b')$, while for $b'<t<z$ one has $|\psi_1'(t)|=|\varphi'(t+b-b')|\leq L$ because $\varphi$ is $L$-biLipschitz, we get immediately the validity of the second inequality. Concerning the first one, we have just to recall that $|\sigma'|\leq 1$, so that
\[
b'-y =\sigma(b) - \sigma\big(\sigma^{-1}(y)\big)\leq b - \sigma^{-1}(y)\,,
\]
and then we directly get
\[
\frac{|\psi_1(z)-\psi_1(y)|}{|z-y|}=
\frac{|\varphi(z-b'+b)-\varphi(\sigma^{-1}(y))|}{z-b'+b'-y}
\geq \frac{|\varphi(z-b'+b)-\varphi(\sigma^{-1}(y))|}{z-b'+b-\sigma^{-1}(y)}\geq \frac 1L\,
\]
where in the last inequality we have used the bi-Lipschitz property of $\varphi$. So, also the first inequality in~(\ref{mildbiLip}) is proved and thus the claim is established.\par
To get the existence of a short function, it is enough to recall that all the fast functions are uniformly continuous on uniformly bounded intervals; thus, such existence follows directly by Ascoli--Arzel\`a Theorem and since any uniform limit of fast functions is also fast.\par
To conclude, we take a short function $\psi$ on $[a,b]$, and we have to show that $\psi$ is $L$-biLipschitz. We have already noticed that $|\psi'|\leq L$, so the Lipschitz property is obvious and we only have to care about the inverse Lipschitz property. To do so, let us take $y<z \in [0,C-(b-b')]$. If $y$ and $z$ are both smaller than $a$, or both larger than $b'$, this comes directly by the inverse Lipschitz property of $\varphi$; if one between $y$ and $z$ is smaller than $a$, and the other is larger than $b'$, the same argument applies since
\[
|\psi(z)-\psi(y)| = \big|\varphi(z+b-b') - \varphi(y)\big| \geq \frac 1L\, \big|(z+b-b')-y\big| \geq \frac 1L\, |z-y|\,;
\]
if exactly one between $y$ and $z$ is in $[a,b']$, the inequality is ensured by~(\ref{mildbiLip}). Summarizing, the only situation left to prove is the case $a<y<z<b'$.\par
Let us assume, by contradiction, that there exists $a<r<s<b'$ such that
\begin{equation}\label{contr1}
|\psi(s)-\psi(r)| < \frac 1L\, |s-r|\,,
\end{equation}
and consider the function $\psi_{rs}^+:[0,C-(b-b'')]\to \R^2$ given by Definition~\ref{varphist}. Notice that $\psi$ cannot be a straight line between $\psi(r)$ and $\psi(s)$, by~(\ref{contr1}) and the fact that $|\psi'|=L$ on $(a,b')$. Thus, Lemma~\ref{ifnotshort} ensures the existence of some $p\notin [a,b'']$ (and, without loss of generality, we can think $p<a$) and $q\in (r,s^+)$ such that
\begin{equation}\label{contr2}
|\psi_{rs}^+(q)-\psi_{rs}^+(p)| < \frac 1L\, (q-p)\,.
\end{equation}
Finally, making use of the validity of~(\ref{mildbiLip}) for $\psi$, together with~(\ref{contr1}) and~(\ref{contr2}), we get
\[\begin{split}
\frac 1L\, |s-p| &\leq  |\psi(s)-\psi(p)|
\leq  |\psi(s)-\psi_{rs}^+(q)| + |\psi_{rs}^+(q)-\psi(p)|\\
&= |\psi(s)-\psi(r)| - |\psi_{rs}^+(q)-\psi_{rs}^+(r)| + |\psi_{rs}^+(q)-\psi_{rs}^+(p)|
<\frac 1L\, (s-r) - L(q-r) + \frac 1L\, (q-p)\\
&\leq \frac 1L |s-p|\,,
\end{split}\]
and since the contradiction shows that $\psi$ is $L$-biLipschitz, the proof is concluded.
\end{proof}

Keep in mind that we aim to prove Proposition~\ref{nLI}, that is, we want to find some $L$-biLipschitz function $\psi$ which satisfies~(\ref{propprel}) and which is ${\rm C}^1$ on $[a,b']$. By definition, any function fast in $[a,b]$ already satisfies~(\ref{propprel}), and the lemma above ensures that any function short on $[a,b]$ is also $L$-biLipschitz. We will then get our proof once we show that any short function is necessarily ${\rm C}^1$ on $[a,b']$. To do so, we start with a couple of preliminary geometric estimates.
\begin{lemma}\label{3.4}
For every small constants $\ell$ and $\eta$ and for every $L\geq 1$, there exists $\bar\delta(\ell,\eta,L)\ll \ell$ satisfying the following properties. Let $P,\,Q,\,S$ be three points in $\R^2$ such that $\segm{PQ}\geq\ell/2$ and $\delta=\segm{QS}\leq\bar\delta$. Call also $\theta,\,\theta',\,\nu\in\S^1$ the directions of the segments $PQ$, $PS$ and $QS$ respectively. Then the following holds true:
\begin{gather}
|\theta-\theta'| \leq \frac \eta{L^2}\,,\label{prop1}\\
\theta\cdot \nu - \frac\eta{L^2} \leq \frac{\segm{PS}-\segm{PQ}}\delta \leq \theta\cdot \nu + \frac\eta{L^2}\,.\label{prop2}
\end{gather}
\end{lemma}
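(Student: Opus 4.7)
The plan is a direct elementary computation after fixing coordinates. Place $P$ at the origin, set $r = \overline{PQ}$, $s = \overline{PS}$, and $u = \theta \cdot \nu$, so that $Q = r\theta$ and $S = Q + \delta\nu$. Expanding $|S|^{2}$ yields the master identity
\[
s^{2} = r^{2} + 2r\delta u + \delta^{2},
\]
which drives both estimates.

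For \eqref{prop2}, I factor $s^{2}-r^{2}=(s-r)(s+r)$ to rewrite the identity as $(s-r)/\delta = (2ru+\delta)/(s+r)$. Subtracting $u$, the numerator collapses to $u(r-s)+\delta$, bounded in absolute value by $2\delta$ via the triangle inequality $|s-r|\leq\delta$ and $|u|\leq 1$; the denominator is at least $r\geq \ell/2$. Hence $\bigl|(s-r)/\delta - u\bigr| \leq 4\delta/\ell$, which is $\leq \eta/L^{2}$ as soon as $\bar\delta\leq \eta\ell/(4L^{2})$.

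For \eqref{prop1}, the angle $\alpha$ between $\theta$ and $\theta'$ is exactly $\angle QPS$. The law of sines in triangle $PQS$ gives $\sin\alpha = \delta \sin(\angle PQS)/s \leq \delta/s$, and since $s\geq r-\delta \geq \ell/4$ once $\bar\delta \leq \ell/4$, we obtain $\sin\alpha \leq 4\delta/\ell$. For such small angles (certainly $\alpha\leq \pi/2$) one has $|\theta-\theta'| = 2\sin(\alpha/2) \leq \alpha \leq (\pi/2)\sin\alpha \leq 2\pi\delta/\ell$, which is $\leq \eta/L^{2}$ provided $\bar\delta$ is chosen of order $\eta\ell/L^{2}$.

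There is no deep obstacle here: both bounds are textbook first-order geometric estimates, essentially a Taylor expansion of length and angle in the small parameter $\delta/r$. The only care needed is to package all numerical constants into a single choice $\bar\delta = \bar\delta(\ell,\eta,L)$ of order $\eta\ell/L^{2}$, which automatically satisfies $\bar\delta\ll\ell$ as required by the statement. The whole argument hinges on keeping $s$ bounded below by a fixed fraction of $\ell$, which is guaranteed as soon as $\delta$ is small relative to $\ell$.
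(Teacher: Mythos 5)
Your proof is correct, and it takes a genuinely different route from the paper. For~(\ref{prop1}), the paper simply asserts existence of $\bar\delta$ ``by continuity'' and never quantifies it, whereas you supply an explicit bound via the law of sines, $\sin\alpha\leq\delta/s$, and the elementary chord-vs-angle estimates $2\sin(\alpha/2)\leq\alpha\leq(\pi/2)\sin\alpha$ for $\alpha\leq\pi/2$ (which holds since $\delta$ is much smaller than the other two sides, so $\alpha$ is the smallest angle of the triangle). For~(\ref{prop2}), the paper parametrizes the segment $QS$ as $\tau(t)=Q+t\nu$, writes $\segm{PS}-\segm{PQ}=\int_0^\delta \frac{d}{dt}\segm{P\tau(t)}\,dt=\int_0^\delta\theta(t)\cdot\nu\,dt$, and then bounds the error integral using~(\ref{prop1}); so the paper's proof of~(\ref{prop2}) depends on~(\ref{prop1}). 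You instead derive it directly from the law-of-cosines identity $s^2=r^2+2r\delta u+\delta^2$ and the algebraic rewriting $(s-r)/\delta-u=\bigl(u(r-s)+\delta\bigr)/(s+r)$, with the triangle inequality $|r-s|\leq\delta$, giving the clean bound $4\delta/\ell$. Your route buys full independence of the two estimates and explicit constants (e.g.\ $\bar\delta\sim\eta\ell/L^2$), while the paper's integral argument is shorter but leaves the constant in~(\ref{prop1}) implicit. Both are valid; yours is the more self-contained and quantitative.
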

\begin{proof}
Once $\ell$, $\eta$  and $L$ are given, the existence of some $\bar\delta$ satisfying~(\ref{prop1}) is immediate by continuity; we will show that the same choice of $\bar\delta$ gives also~(\ref{prop2}).\par
Let us call $\tau:[0,\delta]\to \R^2$ the function given by $\tau(t)=Q + t\nu$, so that $\tau(0)=Q$ and $\tau(\delta)=S$; call also $\theta(t)$ the direction of the segment $P\tau(t)$, and observe that $|\theta(t)-\theta|\leq \eta/L^2$ by~(\ref{prop1}) applied to the triple $(P,Q,\tau(t))$. Hence,
\[
\segm{PS}-\segm{PQ}=\int_0^\delta \frac{d}{dt}\, \Big( \segm{P\tau(t)}\Big) \, dt
=\int_0^\delta \theta(t)\cdot \nu\,dt
= \delta \theta\cdot \nu + \int_0^\delta (\theta(t)-\theta)\cdot \nu\,dt\,,
\]
and the modulus of the latter integral is smaller than $\delta\eta/L^2$, hence~(\ref{prop2}) follows.
\end{proof}

\begin{lemma}\label{smallone}
Let $\ell$, $\eta$  and $L$ be fixed, let $\varphi:[0,C]\to\R^2$ be an $L$-biLipschitz function, and take three points $P=\varphi(p)$, $Q=\varphi(q)$, $R=\varphi(r)$, with $p<q<r$, satisfying $\segm{PQ}\geq \ell$ and $\delta:=\segm{QR}\leq \bar\delta(\ell,\eta,L)$. Assume that the function $\varphi^+_{qr}:[0,C-(r-r^+)]\to \R^2$ of Definition~\ref{varphist} does not satisfy the inverse $L$-Lipschitz property at the pair $(p,t)$ with some $q\leq t\leq r^+$. Then,
\begin{gather}
\frac 1{L^2}-2\,\frac\eta{L^2} \leq \theta\cdot \nu \leq \frac 1{L^2}+ 2\,\frac\eta{L^2}\,,\label{prop3}\\
\frac{\segm{PQ}}{|q-p|} \leq \frac 1 L + \frac{3\eta\delta}{|q-p| L^2}\,,\label{prop4}\\
\frac{\H^1(\arc{QR})}{\segm{QR}} \leq 1+6\eta\,,\label{prop5}
\end{gather}
where $\theta$ and $\nu$ are the directions of the segments $PQ$ and $QR$ respectively.
\end{lemma}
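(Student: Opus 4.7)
The plan is to translate the failure of the inverse $L$-Lipschitz property for $\varphi^+_{qr}$ into a convex inequality on $[0,\delta]$. For $\sigma\in[0,\delta]$ set $S(\sigma):=Q+\sigma\nu$ and define
\[
g(\sigma):=\segm{PS(\sigma)}-\frac{q-p}{L}-\frac{\sigma}{L^2}.
\]
Since $\sigma\mapsto|P-Q-\sigma\nu|$ is the norm of an affine function, $g$ is convex on $[0,\delta]$. Writing $\sigma=L(t-q)\in[0,\delta]$, the point $\varphi^+_{qr}(t)$ equals $S(\sigma)$ and $t-p=(q-p)+\sigma/L$, so the hypothesis reads $g(\sigma)<0$ for this $\sigma$. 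On the other hand $g(0)\geq 0$ is exactly the inverse Lipschitz bound $\segm{PQ}\geq(q-p)/L$ of $\varphi$ at $(p,q)$, and $g(\delta)\geq 0$ follows from $\segm{PR}\geq(r-p)/L$ combined with $\delta=\segm{QR}\leq L(r-q)$.

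Convex with nonnegative boundary values and negative inside, $g$ attains its minimum at an interior point $\sigma^\star\in(0,\delta)$ with $g'(\sigma^\star)=0$. A direct computation gives $g'(\sigma)=\theta'(\sigma)\cdot\nu-1/L^2$, where $\theta'(\sigma)\in\S^1$ is the direction of $PS(\sigma)$; hence
\[
\theta'(\sigma^\star)\cdot\nu=\frac{1}{L^2}.
\]
Now Lemma~\ref{3.4} applies to $(P,Q,S(\sigma^\star))$, since $\segm{PQ}\geq\ell$ and $|QS(\sigma^\star)|=\sigma^\star\leq\delta\leq\bar\delta$, and estimate~(\ref{prop1}) gives $|\theta-\theta'(\sigma^\star)|\leq\eta/L^2$. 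Dotting with the unit vector $\nu$ and using the previous identity yields $|\theta\cdot\nu-1/L^2|\leq\eta/L^2$, which proves~(\ref{prop3}).

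For~(\ref{prop4}), I will combine $g(\sigma^\star)<0$ with the lower bound from Lemma~\ref{3.4}~(\ref{prop2}) applied to the same triple, namely $\segm{PS(\sigma^\star)}\geq\segm{PQ}+\sigma^\star\theta\cdot\nu-\sigma^\star\eta/L^2$, to get $\segm{PQ}<(q-p)/L+\sigma^\star(1/L^2-\theta\cdot\nu+\eta/L^2)$; then the just-established lower bound $\theta\cdot\nu\geq 1/L^2-2\eta/L^2$ from~(\ref{prop3}), together with $\sigma^\star\leq\delta$, closes the argument. For~(\ref{prop5}), apply the upper half of Lemma~\ref{3.4}~(\ref{prop2}) to $(P,Q,R)$ to bound $\segm{PR}\leq\segm{PQ}+\delta\theta\cdot\nu+\delta\eta/L^2$; substituting the upper bound on $\theta\cdot\nu$ from~(\ref{prop3}) and the bound on $\segm{PQ}$ from~(\ref{prop4}), and comparing with the inverse Lipschitz bound $\segm{PR}\geq(r-p)/L$, pins $r-q\leq\delta(1+6\eta)/L$; the $L$-Lipschitz inequality $\H^1(\arc{QR})\leq L(r-q)$ then gives~(\ref{prop5}).

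The main obstacle is spotting the correct convex auxiliary function $g$; once it is in hand, the critical-point identity $\theta'(\sigma^\star)\cdot\nu=1/L^2$ combined with the direction control~(\ref{prop1}) of Lemma~\ref{3.4} is precisely what locks $\theta\cdot\nu$ into an $\eta/L^2$-neighbourhood of $1/L^2$, while (\ref{prop4}) and (\ref{prop5}) then follow by routine bookkeeping that plays off the two directions of~(\ref{prop2}) against the endpoint inequalities $g(0),g(\delta)\geq 0$.
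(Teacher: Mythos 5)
Your proof is correct, and for the central estimate~(\ref{prop3}) it takes a genuinely different route from the paper. You introduce the auxiliary function $g(\sigma)=\segm{PS(\sigma)}-(q-p)/L-\sigma/L^2$, observe that it is convex (a norm of an affine function minus an affine function), nonnegative at $\sigma=0$ and $\sigma=\delta$ by the inverse Lipschitz property of $\varphi$, yet negative at the interior $\sigma$ corresponding to the pair $(p,t)$ in the hypothesis. The interior minimiser $\sigma^\star$ then satisfies the exact critical-point identity $\theta'(\sigma^\star)\cdot\nu=1/L^2$, and a single application of~(\ref{prop1}) converts this into a bound on $\theta\cdot\nu$; in fact you obtain the stronger bound $|\theta\cdot\nu-1/L^2|\le\eta/L^2$. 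The paper instead works purely with finite differences: it applies~(\ref{prop2}) to the triple $(P,Q,Q_\sigma)$ to get the upper bound $\theta\cdot\nu<1/L^2+\eta/L^2$, then applies~(\ref{prop2}) again to $(P,Q_\sigma,R)$ (together with the bi-Lipschitz bound at $(p,r)$) to get the lower bound $\tilde\theta\cdot\nu>1/L^2-\eta/L^2$, and finally bridges the two directions via~(\ref{prop1}). Your optimisation viewpoint is cleaner and more conceptual (the two one-sided bounds collapse into one exact identity at the critical point), at the minor cost of invoking differentiability of $\sigma\mapsto\segm{PS(\sigma)}$ (which is justified here since $\segm{PS(\sigma)}\ge\ell-\delta>0$). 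For~(\ref{prop4}) and~(\ref{prop5}) your argument is essentially the same bookkeeping as the paper's: combine the negativity of $g$ at one point with the lower branch of~(\ref{prop2}) and the bound~(\ref{prop3}), and then chain~(\ref{prop2}), (\ref{prop4}), (\ref{prop3}) against $\segm{PR}\ge(r-p)/L$ and $\H^1(\arc{QR})\le L(r-q)$.
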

\begin{proof}
First of all, let us call for brevity $\sigma:=\segm{Q\varphi^+_{qr}(t)}$ and $Q_\sigma=\varphi^+_{qr}(t)$. The failure of the inverse $L$-Lipschitz property at $p$ and $t$, together with~(\ref{prop2}) applied with $S=Q_\sigma$ and with the fact that $\varphi$, instead, is $L$-biLipschitz, gives
\begin{equation}\label{ty}
\frac\sigma{L^2}+\frac{q-p}L=\frac{t-p}L>\segm{\varphi^+_{qr}(p)\varphi^+_{qr}(t)}=\segm{PQ_\sigma}
\geq\segm{PQ}+\sigma\bigg(\theta\cdot \nu - \frac\eta{L^2}\bigg)
\geq\frac{q-p}L+\sigma\bigg(\theta\cdot \nu - \frac\eta{L^2}\bigg)\,,
\end{equation}
which can be rewritten as
\[
\theta\cdot \nu < \frac 1{L^2} + \frac\eta{L^2}\,,
\]
so that one inequality in~(\ref{prop3}) is already proved.\par
Notice now that $\segm{PQ_\sigma}\geq \ell-\bar\delta>\ell/2$, so we can apply~(\ref{prop2}) also to $Q=Q_\sigma$ and $S=R$. By~(\ref{ty}) and the biLipschitz property of $\varphi$ again, we have then
\[\begin{split}
\frac {\sigma-L(r^+-q)}{L^2}+\frac{r-p}L
&\geq\frac {\sigma-L(r-q)}{L^2}+\frac{r-p}L
=\frac \sigma{L^2}+\frac{q-p}L>\segm{PQ_\sigma}\\
&\geq\segm{PR}-(\segm{QR}-\sigma)\bigg(\tilde\theta\cdot \nu + \frac\eta{L^2}\bigg)\\
&\geq \frac{r-p}L+(\sigma-L(r^+-q))\bigg(\tilde\theta\cdot \nu + \frac\eta{L^2}\bigg)\,,
\end{split}\]
where $\tilde\theta$ is the direction of $PQ_\sigma$. Since $L(r^+-q)=\segm{QR}\geq \sigma$, we deduce
\[
\tilde\theta\cdot\nu > \frac 1{L^2} -\frac\eta{L^2}\,.
\]
And since $|\tilde\theta-\theta|<\eta/L^2$ by~(\ref{prop1}), we conclude the validity of~(\ref{prop3}).\par

Property~(\ref{prop4}) can be directly deduced from~(\ref{ty}) and~(\ref{prop3}), since
\[
\frac{\segm{PQ}}{|q-p|} < \frac 1 L +\frac\sigma{|q-p|}\,\bigg(\frac 1{L^2} - \theta\cdot \nu + \frac\eta{L^2}\bigg)
\leq \frac 1 L +\frac{3\eta\sigma}{|q-p|L^2}
\leq \frac 1 L +\frac{3\eta\delta}{|q-p|L^2}\,.
\]
And finally, to get property~(\ref{prop5}), first we use that $\varphi$ is $L$-biLipschitz to get
\begin{equation}\label{quasopra}
\segm{PR}\geq \frac{r-p}{L} = \frac{q-p}L + \frac{r-q}L \geq \frac{q-p}L + \frac{\H^1(\arc{QR})}{L^2}\,,
\end{equation}
and then we use~(\ref{prop2}), (\ref{prop4}) and~(\ref{prop3}) to get
\[
\segm{PR} \leq \segm{PQ} + \delta \bigg( \theta\cdot \nu + \frac \eta{L^2}\bigg)
\leq \frac {q-p} L + \frac\delta{L^2} ( 1+ 6\eta)\,,
\]
which inserted in~(\ref{quasopra}) gives~(\ref{prop5}).
\end{proof}

Let us now present the main technical tool of this section: thanks to this result, we will be able to prove the regularity of any short map.

\begin{lemma}\label{bigone}
Let $\ell$, $\eta$  and $L$ be fixed, let $\varphi:[0,C]\to\R^2$ be an $L$-biLipschitz function with $|\varphi'|\equiv L$ in $(q,r)$, and take five points $P=\varphi(p)$, $Q=\varphi(q)$, $R=\varphi(r)$, $Q'=\varphi(q')$ and $Q''=\varphi(q'')$ with $p<q<q'<q''<r$, satisfying $\segm{PQ}\geq 2\ell$ and $\delta:=\segm{QR}\leq \bar\delta(\ell,\eta,L)$. Assume also that both $Q'$ and $Q''$ have distance at least $\delta/3$ from each of $Q$ and $R$, that $\eta$ is small with respect to $L-1$ and $1/L$, and that~(\ref{prop5}) holds true. Then, if the inverse $L$-Lipschitz property for the function $\varphi_{q'q''}^+$ is not satisfied by $P$ and some point in the segment $Q'Q''$, one has
\begin{equation}\label{oscest}
|\nu-\nu'| \leq \frac 12\, \min \bigg\{\frac 1{L^2}, 1-\frac 1{L^2}\bigg\}\qquad \Longrightarrow \qquad |\nu-\nu'| \leq 15\sqrt\eta\,,
\end{equation}
where $\nu$ and $\nu'$ are the directions of the segments $QR$ and $Q'Q''$ respectively.
\end{lemma}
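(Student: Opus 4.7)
The strategy is to verify the hypotheses of Lemma~\ref{smallone} on the sub-triple $(P,Q',Q'')$, exploit the given near-straightness (\ref{prop5}) on the full arc $\arc{QR}$, and then compare the directions $\nu$ and $\nu'$ through a Cauchy--Schwarz estimate in coordinates aligned with $\nu$.

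First I would verify Lemma~\ref{smallone} on $(P,Q',Q'')$: since $\segm{PQ'}\geq\segm{PQ}-\segm{QQ'}\geq 2\ell-\delta\geq\ell$ and $\sigma:=\segm{Q'Q''}\leq\delta\leq\bar\delta(\ell,\eta,L)$, the assumed failure of the inverse $L$-Lipschitz property for $\varphi^+_{q'q''}$ gives, via (\ref{prop3}), $\theta'\cdot\nu'=1/L^2+O(\eta/L^2)$ and, via (\ref{prop5}) applied to the sub-arc, $L(q''-q')\leq(1+6\eta)\sigma$.

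Next I would place coordinates so that $\nu=(1,0)$ and $Q=\mathbf{0}$; writing $\varphi(s)=(x(s),y(s))$ on $[q,r]$, so that $x'^2+y'^2\equiv L^2$, $\int_q^r x'=\delta$, $\int_q^r y'=0$. The assumed (\ref{prop5}) gives $L(r-q)\leq(1+6\eta)\delta$, whence by Cauchy--Schwarz on $\int x'$ combined with $x'^2+y'^2=L^2$,
\[
\int_q^r y'^2\,ds \;=\; L^2(r-q)-\int_q^r x'^2\,ds \;\leq\; L^2(r-q)-\frac{\delta^2}{r-q} \;\leq\; 12\,\eta\,\delta\,L\,.
\]
A second Cauchy--Schwarz on $(q',q'')$ together with the bound $q''-q'\leq(1+6\eta)\sigma/L$ then gives
\[
|y(q'')-y(q')|^2 \;\leq\; (q''-q')\int_{q'}^{q''}y'^2\,ds \;\leq\; 13\,\eta\,\sigma\,\delta\,,
\]
so that, with $\alpha$ denoting the (signed) angle between $\nu$ and $\nu'$, one has $|\sin\alpha|=|y(q'')-y(q')|/\sigma\leq\sqrt{13\,\eta\,\delta/\sigma}$.

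The hypothesis $|\nu-\nu'|\leq\tfrac12\min\{1/L^2,1-1/L^2\}<1$ ensures $|\alpha|<\pi/3$, so that $|\nu-\nu'|$ is comparable to $|\sin\alpha|$; if $\sigma\geq\delta/3$, the previous estimate yields $|\nu-\nu'|\leq 15\sqrt\eta$ directly. The main obstacle is the regime $\sigma<\delta/3$, where the $L^2$-type bound degenerates and one cannot rely on Cauchy--Schwarz alone. In this regime the plan is to combine (\ref{prop3}) with Lemma~\ref{3.4}(\ref{prop1}) applied to $(P,Q,Q')$ (valid since $\segm{PQ}\geq 2\ell$ and $\segm{QQ'}\leq\delta\leq\bar\delta$), which yields $|\theta-\theta'|\leq\eta/L^2$ and hence $\theta\cdot\nu'=1/L^2+O(\eta/L^2)$. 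Writing $\theta=(\cos\phi,\sin\phi)$ and $\nu'=(\cos\alpha,\sin\alpha)$, this becomes $\cos(\phi-\alpha)=1/L^2+O(\eta/L^2)$, which admits two branches for $\alpha$; the assumption $|\nu-\nu'|\leq\tfrac12\min\{1/L^2,1-1/L^2\}$ rules out the ``reflected'' branch giving $\alpha$ of order $\pm 2\arccos(1/L^2)$, leaving only the branch in which the complementary identity for $\theta\cdot\nu$ supplied by Lemma~\ref{3.4}(\ref{prop2}) applied to $(P,Q,R)$ forces $|\alpha|=O(\sqrt\eta)$, and hence $|\nu-\nu'|\leq 15\sqrt\eta$ in this regime as well.
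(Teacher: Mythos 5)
Your proposal is genuinely different from the paper's for the first regime, and incomplete for the second, which is the one that actually matters.

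For the regime $\segm{Q'Q''}\geq\delta/3$, your coordinate argument is correct and rather elegant: putting $\nu=(1,0)$, you bound $\int y'^2$ by $O(\eta\delta L)$ from $(\ref{prop5})$ via $\int x'^2\geq\delta^2/(r-q)$, then a second Cauchy--Schwarz on $(q',q'')$ gives $|\sin\alpha|^2\lesssim\eta\delta/\sigma$, and the hypothesis $|\nu-\nu'|<1$ lets you convert $|\sin\alpha|$ to $|\nu-\nu'|$. This is a legitimate alternative to the paper's route and uses $(\ref{prop5})$ more directly. Note however that this regime is essentially the easiest one: since $\segm{QQ'},\segm{Q''R}\geq\delta/3$ and $\H^1(\arc{QR})\leq(1+6\eta)\delta$, one always has $\segm{Q'Q''}\leq(1/3+6\eta)\delta$, so you are at the boundary of this regime at best, and the case $\sigma\ll\delta$ is where all the real difficulty lives.

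In the regime $\sigma<\delta/3$ the argument has a genuine gap. From $(\ref{prop3})$ at $(P,Q',Q'')$ and $(\ref{prop1})$ at $(P,Q,Q')$ you correctly get $\theta\cdot\nu'=1/L^2+O(\eta/L^2)$. But this single scalar equation determines $\alpha$ (relative to $\phi$) only up to reflection and, crucially, only if you already know $\phi=\arg\theta$ to precision $O(\sqrt\eta)$, i.e.\ you need $\theta\cdot\nu=1/L^2+O(\eta)$. You cannot obtain this from $(\ref{prop2})$ applied to $(P,Q,R)$: that estimate merely expresses $\theta\cdot\nu$ in terms of $(\segm{PR}-\segm{PQ})/\delta$, and you have no independent handle on $\segm{PR}-\segm{PQ}$ because the failure of the inverse Lipschitz property is assumed for $\varphi^+_{q'q''}$, not for $\varphi^+_{qr}$; hence $(\ref{prop3})$ does not apply to the triple $(P,Q,R)$. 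Without that, $\cos(\phi-\alpha)\approx 1/L^2$ admits a one-parameter family (shifting $\phi$ and $\alpha$ together) and does not pin $\alpha$ near $0$. The ingredient you are missing is precisely the one the paper introduces: control on $\theta'\cdot\tilde\nu$ where $\tilde\nu$ is the direction of the chord $QQ'$. The paper obtains $\theta'\cdot\tilde\nu\leq 1/L^2+O(\eta)$ by combining $(\ref{prop2})$ on $(P,Q',Q)$ (taken backwards), $(\ref{prop4})$ on $(P,Q',Q'')$, the biLipschitz lower bound for $\segm{PQ}$, the constant speed $|\varphi'|\equiv L$ on $(q,r)$, and the arc-length bound $\H^1(\arc{QQ'})\leq(1+18\eta)\segm{QQ'}$ deduced from $(\ref{prop5})$. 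Only then is the quadrant/trigonometric inequality $|\theta'\cdot\tilde\nu-\theta'\cdot\nu'|\geq|\tilde\nu-\nu'|^2/3$ combined with $|\tilde\nu-\nu|\leq 6\sqrt\eta$ (which follows from the $2\delta\sqrt\eta$-closeness of the arc to the segment $QR$) to get $|\nu-\nu'|\leq15\sqrt\eta$. Your sketch does not produce any analogue of this chain, so in the hard regime the proof does not go through as written.
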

\begin{proof}
First of all, we use property~(\ref{prop5}) for $Q$ and $R$, which is valid by assumption: we immediately get that every point in the curve $\arc{QR}$, hence in particular both $Q'$ and $Q''$, has distance less than $2\delta\sqrt\eta$ from the segment $QR$. Since $\segm{QQ'}\geq \delta/3$ and $\segm{Q''R}\geq \delta/3$, we deduce that
\begin{align}\label{almuno}
|\nu-\tilde\nu| \leq 6\sqrt\eta\,, && \segm{Q'Q''}\leq \frac \delta 2\leq \frac 32\, \segm{QQ'}\,,
\end{align}
being $\tilde \nu$ the direction of $QQ'$. Moreover, the validity of~(\ref{prop5}) also implies that
\[
\segm{QR}(1+6\eta) \geq \H^1(\arc{QR}) 
= \H^1(\arc{QQ'})+\H^1(\arc{Q'R})
\geq \H^1(\arc{QQ'})+\segm{Q'R}
\geq \H^1(\arc{QQ'})+\segm{QR}-\segm{QQ'}\,,
\]
which in turn implies by the assumptions
\begin{equation}\label{1207}
\H^1(\arc{QQ'}) \leq 6\eta \segm{QR} + \segm{QQ'} \leq \segm{QQ'}(1+18\eta)\,.
\end{equation}
Let us now use the fact that the inverse $L$-Lipschitz property for the function $\varphi_{q'q''}^+$ fails for $P$ and some point in $Q'Q''$. As a consequence, we can apply Lemma~\ref{smallone} to the points $P,\, Q'$ and $Q''$, so by~(\ref{prop3}), calling $\theta'$ the direction of $PQ'$, we know
\begin{equation}\label{alkn}
\Big|\theta' \cdot \nu' - \frac 1{L^2}\Big| \leq 2\,\frac\eta{L^2}\,.
\end{equation}
Let us now assume that
\begin{equation}\label{soquesto}
|\nu-\nu'| \leq \frac 12\, \min\bigg\{\frac 1{L^2}, 1-\frac 1{L^2}\bigg\}\,,
\end{equation}
so that the proof will be concluded once we show that
\begin{equation}\label{bastaquesto}
|\nu-\nu'| \leq 15\sqrt\eta\,.
\end{equation}
First of all, putting together~(\ref{alkn}), (\ref{soquesto}) and~(\ref{almuno}), a simple geometric argument shows that the directions $\nu$ and $\tilde\nu$ are in the same quadrant with respect to $\theta'$ as soon as $\eta$ is small enough; more precisely, this holds true as soon as $\eta$ is much smaller than both $L-1$ and $1/L$ (keep in mind that $L>1$). As a consequence, a trigonometric argument immediately gives
\[
\big|\theta' \cdot \tilde\nu - \theta' \cdot \nu'\big| \geq \frac{|\tilde\nu-\nu'|^2}3\,.
\]
Just to fix the ideas, we can assume that
\begin{equation}\label{siqu}
\theta' \cdot \tilde\nu - \theta' \cdot \nu' \geq \frac{|\tilde\nu-\nu'|^2}3\,,
\end{equation}
otherwise the argument below about $\segm{QQ'}$ has to be replaced by a completely similar argument about $\segm{RQ''}$.\par
Let us now collect all the information that we have: by (\ref{prop2}) applied to $P,\,Q'$ and $Q$, by~(\ref{prop4}) applied to $P,\,Q'$ and $Q''$, and by~(\ref{almuno}), we get
\[\begin{split}
\frac{q-p}L &\leq \segm{PQ} \leq \segm{PQ'}+\segm{QQ'} \bigg( \theta' \cdot (-\tilde\nu)+\frac \eta{L^2}\bigg)\\
&\leq \frac{q'-p}L + \frac{3 \eta \segm{Q'Q''}}{L^2} + \segm{QQ'} \bigg(\frac \eta{L^2} -\theta' \cdot \tilde\nu\bigg)
\leq \frac{q'-p}L + \frac{9 \eta \segm{QQ'}}{2L^2} + \segm{QQ'} \bigg(\frac \eta{L^2} -\theta' \cdot \tilde\nu\bigg)\\
&\leq\frac{q'-p}L + \segm{QQ'} \bigg(\frac{6\eta}{L^2} -\theta' \cdot \tilde\nu\bigg)\,.
\end{split}\]
This, also keeping in mind the fact that $|\varphi'|\equiv L$ in $(q,r)$ and~(\ref{1207}), implies
\[
\segm{QQ'} \bigg(\theta' \cdot \tilde\nu -\frac{6\eta}{L^2} \bigg)\leq \frac{q'-q}L 
=\frac{\H^1(\arc{QQ'})}{L^2}
\leq \frac{\segm{QQ'}(1+18\eta)}{L^2}\,,
\]
which finally gives
\[
\theta' \cdot \tilde\nu \leq \frac 1{L^2} + \frac{24\eta}{L^2}\,.
\]
Inserting now this estimate and~(\ref{alkn}) in~(\ref{siqu}), we get
\[
|\tilde\nu-\nu'| \leq \frac{9\sqrt\eta}L < 9\sqrt\eta\,,
\]
which together with~(\ref{almuno}) finally gives~(\ref{bastaquesto}).
\end{proof}

Even though the estimate~(\ref{oscest}) is quite obscure, it gives an important information, namely, if the directions $\nu$ and $\nu'$ are not too far away from each other, then they must be very close. We can now see that this implies the regularity of a short map $\varphi$. First of all, we prove the internal regularity in the open segment $(a,b)$.

\begin{lemma}\label{shortonC1}
Let $\varphi$ be an $L$-biLipschitz function, short on $[a,b]$. Then, $\varphi$ is of class ${\rm C}^1$ in the open interval $(a,b)$.
\end{lemma}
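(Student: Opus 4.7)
The plan is to show that the unit tangent $\tau:=\varphi'/L$, defined a.e.\ on $(a,b)$ because $|\varphi'|\equiv L$ there, admits a continuous extension to the whole interval $(a,b)$. This reduces to proving that, for every $c\in(a,b)$ and every $\eta>0$ small, $\tau$ has oscillation $O(\sqrt\eta)$ on some neighborhood of $c$; letting $\eta\to 0$ then yields continuity at $c$. The decisive input will be the dichotomy~(\ref{oscest}), combined with Lemma~\ref{ifnotshort} (whose availability is precisely why shortness is useful), which guarantees the failure of the inverse $L$-Lipschitz property needed to trigger Lemma~\ref{bigone}.

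Fix $c\in(a,b)$ and pick $q<c<r$ in $(a,b)$ with $\delta:=\segm{QR}<\bar\delta(\ell,\eta,L)$, where $\ell$ is a positive constant ensuring $\segm{PQ}\geq 2\ell$ for every $P=\varphi(p)$ with $p\notin[a,b]$ (such an $\ell$ exists because the $L$-biLipschitz property gives $\segm{PQ}\geq|p-q|/L$, and the distance of $q$, close to $c$, from both $a$ and $b$ is bounded below by a fixed positive number). If $\varphi$ is already linear on $(q,r)$ then $\tau$ is constant there; otherwise Lemma~\ref{ifnotshort} applied to $(q,r)$ produces a pair at which the inverse $L$-Lipschitz property of $\varphi_{qr}^+$ fails, and Lemma~\ref{smallone} then yields~(\ref{prop5}) for $\arc{QR}$, so the arc lies in a strip of width $O(\delta\sqrt\eta)$ about the chord $QR$, whose direction I call $\nu$. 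Now consider the admissible set $\mathcal{A}$ of pairs $(q',q'')$ with $q<q'<q''<r$ and $\segm{Q'Q},\segm{Q'R},\segm{Q''Q},\segm{Q''R}\geq\delta/3$: this is a non-empty, path-connected subset of the triangle $\{q<q'<q''<r\}$. For each $(q',q'')\in\mathcal{A}$ where $\varphi$ is not linear, Lemma~\ref{ifnotshort} supplies some $p_1$ (possibly different from $p$) such that the inverse $L$-Lipschitz property of $\varphi_{q'q''}^+$ fails at $P_1:=\varphi(p_1)$ and a point in the segment $Q'Q''$. Since~(\ref{prop5}) is intrinsic to $\arc{QR}$ and independent of the auxiliary point, all hypotheses of Lemma~\ref{bigone} hold with $P_1$ in place of $P$, and~(\ref{oscest}) gives the dichotomy: either $|\nu-\nu'|\leq 15\sqrt\eta$ or $|\nu-\nu'|>\frac12\min\{1/L^2,1-1/L^2\}$, where $\nu'$ is the direction of $Q'Q''$.

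The dichotomy is resolved in favour of the first alternative by noting that $\nu'$ depends continuously on $(q',q'')\in\mathcal{A}$, and that for the explicit ``central'' choice $\segm{QQ'}\approx\segm{RQ''}\approx 2\delta/5$ the strip estimate forces $|\nu-\nu'|=O(\sqrt\eta)$, well below the threshold; connectedness of $\mathcal{A}$ then propagates $|\nu-\nu'|\leq 15\sqrt\eta$ to every admissible pair. To conclude, for any Lebesgue point $t\in(q,r)$ of $\varphi'$ such that $\varphi(t)$ has distance strictly greater than $\delta/3$ from both $Q$ and $R$, the pair $(t-h,t+h)$ lies in $\mathcal{A}$ for all sufficiently small $h>0$, and its chord direction tends to $\tau(t)$ as $h\to 0$; hence $|\tau(t)-\nu|\leq 15\sqrt\eta$, which gives oscillation $\leq 30\sqrt\eta$ of $\tau$ on a definite neighborhood of $c$. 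Shrinking $(q,r)$ and $\eta\to 0$ yields continuity of $\tau$ at $c$. The main obstacle is the middle step: moving from ``Lemma~\ref{bigone} gives a dichotomy for each admissible pair individually'' to ``all admissible pairs fall in the same alternative'', which hinges both on the path-connectedness of $\mathcal{A}$ and on exhibiting the central pair in the ``close'' alternative through the strip estimate derived from~(\ref{prop5}).
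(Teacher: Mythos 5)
Your overall strategy is the same as the paper's: fix a point, surround it by a chord $QR$ of length $\delta<\bar\delta(\ell,\eta,L)$, use Lemma~\ref{ifnotshort} to trigger Lemma~\ref{smallone} and get~(\ref{prop5}) (the strip estimate), and then exploit the gap in~(\ref{oscest}) to force the chord directions of small sub-pairs to be close to $\nu$, which gives a uniform $O(\sqrt\eta)$ oscillation bound for $\varphi'/L$ near the chosen point. The two proofs differ only in how the ``gap'' alternative of~(\ref{oscest}) is ruled out, and it is precisely there that your write-up leaves a genuine hole.

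Your idea is to propagate the ``close'' alternative over the whole admissible set $\mathcal{A}$ by path-connectedness, starting from a central pair that the strip estimate puts in the close alternative. The difficulty is that~(\ref{oscest}) is only available for pairs $(q',q'')$ on which $\varphi$ is \emph{not} linear, since only then does Lemma~\ref{ifnotshort} produce the failing pair needed for Lemma~\ref{bigone}. For linear sub-intervals, no dichotomy holds, so the value of $|\nu-\nu'|$ is unconstrained and the intermediate-value propagation along a path in $\mathcal{A}$ can in principle be interrupted exactly where $g(q',q'')=|\nu-\nu'|$ would otherwise have to cross the forbidden band $(15\sqrt\eta,\tfrac12\min\{1/L^2,1-1/L^2\}]$. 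The paper avoids this entirely: it argues by contradiction, supposes there is a $t$ with $|\varphi'(t)/L-\nu|>16\sqrt\eta$, and then uses continuity to find $q'<t<q''$ with $|\nu'-\nu|$ \emph{exactly} $16\sqrt\eta$; since $t$ lies strictly inside $(q',q'')$ and $\varphi'(t)/L$ is at distance $>16\sqrt\eta$ from $\nu$ while $\nu'$ is at distance exactly $16\sqrt\eta$, the map cannot be linear on $(q',q'')$, so the dichotomy applies and yields the contradiction. This one observation (``if it were linear, $\varphi'(t)=L\nu'$, contradicting the two strict/non-strict inequalities'') is the missing piece in your argument; once you restrict to the one-parameter family $(t-h,t+h)$ and pick $h$ by an intermediate-value argument so that $|\nu'-\nu|$ is strictly between $15\sqrt\eta$ and $|\varphi'(t)/L-\nu|$, the same non-linearity observation repairs the propagation. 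Two smaller points: the central pair with $\segm{QQ'}\approx\segm{RQ''}\approx 2\delta/5$ actually gives $|\nu-\nu'|\lesssim\arctan(20\sqrt\eta)$, which is \emph{not} below your $15\sqrt\eta$ threshold; you need to take $\segm{QQ'}=\segm{RQ''}=\delta/3$, which yields $\arctan(12\sqrt\eta)$ as in the paper (and the paper's threshold $16\sqrt\eta$ in~(\ref{put1}) is chosen with exactly this margin in mind, so that the conclusion $\leq 15\sqrt\eta$ of~(\ref{oscest}) still contradicts it). Also, $\varphi'$ is only defined a.e., so the final oscillation bound should be phrased for a.e.\ $t$ and then used to extract a continuous representative; this is a standard point that the paper also leaves implicit.
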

\begin{proof}
Let us take $a<a'<b'<b$, and let us fix some $\ell\ll 1$ such that
\[
\segm{PQ}>2\ell \quad \forall\, P=\varphi(p),\, Q=\varphi(q), \, p\notin [a,b],\, q\in (a',b')\,.
\]
We aim to show that $\varphi$ is of class ${\rm C}^1$ in $(a',b')$, and this will of course give the thesis since $a'$ and $b'$ are arbitrary.\par

Let us then fix a point $S=\varphi(s)$ with $s\in(a',b')$, and let also $\eta\ll 1$ be given. Define $\bar\delta=\bar\delta(\ell,\eta,L)$ according to Lemma~\ref{3.4}: we claim that there exists some direction $\nu\in\S^1$ for which
\begin{equation}\label{tgt}
\bigg|\frac{\varphi'(t)}L - \nu \bigg| \leq 16\sqrt\eta \qquad \forall\, t\in (a',b'):\, |t-s|\leq \frac{\bar\delta}{12L}\,.
\end{equation}
Since $s$ and $\eta$ are arbitrary, of course this will immediately imply the required ${\rm C}^1$ regularity of $\varphi$ in $(a',b')$.\par
\begin{figure}[thbp]
\input{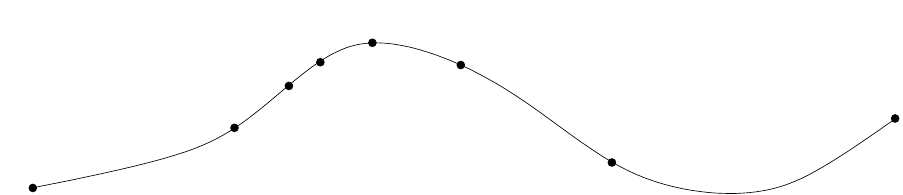_t}
\caption{Situation in Lemma~\ref{shortonC1}}\label{figure}
\end{figure}
Let us now take $Q=\varphi(q)$ and $R=\varphi(r)$ in such a way that $\segm{QR}=\bar\delta$ and that $s=(q+r)/2$, and let us call $\nu$ the direction of the segment $QR$: Figure~\ref{figure} depicts all the involved points. Notice that, by construction and since $\bar\delta\ll \ell$, both $q$ and $r$ are in $(a,b)$, and both $\segm{PQ}$ and $\segm{PR}$ are larger than $\ell$ whenever $P=\varphi(p)$ for some $p\notin [a,b]$. If $\varphi$ is linear on $QR$, then of course $\varphi'$ is constantly equal to $L\nu$ in $(q,r)$, thus~(\ref{tgt}) is already established. Otherwise, Lemma~\ref{ifnotshort} says that there must be some $p\notin(a,b-(r-r^+))$ such that the inverse $L$-Lipschitz property for $\varphi_{qr}^+$ fails at $p$ and at some point in $(q,r^+)$. Thus, we can apply Lemma~\ref{smallone} and in particular~(\ref{prop5}) is true.\par
As noticed in the proof of Lemma~\ref{bigone}, this implies that the whole curve $\arc{QR}$ has distance less than $2\bar\delta\sqrt\eta$ from the segment $QR$. As a consequence, if we call $Q^+$ and $R^-$ the first and the last point of $\arc{QR}$ having distance $\bar\delta/3$ from $Q$ and from $R$ respectively, we clearly have
\begin{equation}\label{put1}
|\tilde\nu-\nu| \leq \arctan(12\sqrt\eta) < 16\sqrt\eta\,,
\end{equation}
where $\tilde\nu$ is the direction of the segment $Q^+R^-$. Let us assume now that~(\ref{tgt}) is false, thus there exists some $t$ with $|t-s|\leq \bar\delta/12L$ such that
\begin{equation}\label{put2}
\bigg|\frac{\varphi'(t)}L - \nu \bigg| > 16\sqrt\eta\,.
\end{equation}
Observe that, by construction, $t$ must belong to the interval $(q^+,r^-)$. By continuity, (\ref{put1}) and~(\ref{put2}) imply the existence of $q^+<q'<t<q''<r^-$ such that
\begin{equation}\label{put3}
|\nu' - \nu | =16\sqrt\eta\,,
\end{equation}
where $\nu'$ is the direction of the segment $Q'Q''$. The function $\varphi$ cannot be a segment between $Q'$ and $Q''$, because otherwise it would be $\varphi'(t) = L\nu'$, against~(\ref{put2}) and~(\ref{put3}). Again by Lemma~\ref{ifnotshort}, we deduce the existence of some new point $\widetilde P=\varphi(\tilde p)$ with $\tilde p \notin [a,b-(q''^+-q'')]$ such that the inverse $L$-Lipschitz property for $\varphi^+_{q'q''}$ fails at $\tilde p$ and some point between $q'$ and $q''^+$ (notice that there is no reason why this point $\widetilde P$ should coincide with the point $P=\varphi(p)$ of few lines ago).\par
We can then apply Lemma~\ref{bigone} with $P=\widetilde P$, and we get the validity of~(\ref{oscest}). Notice that, since $\eta$ has been taken arbitrarily small, by~(\ref{put3}) we can assume without loss of generality that
\[
|\nu-\nu'| = 16 \sqrt \eta \leq \frac 12\, \min \bigg\{\frac 1{L^2}, 1-\frac 1{L^2}\bigg\}\,.
\]
As a consequence, (\ref{oscest}) tells us that $|\nu-\nu'|\leq 15\sqrt\eta$, which is clearly a contradiction with~(\ref{put3}). Therefore, the proof of~(\ref{tgt}) is completed, and as noticed above the thesis follows.
\end{proof}

Now, we can extend the regularity up to the extremes of the interval $[a,b]$. We do this first for an interval compactly contained in $[0,C]$, and then for an interval reaching the boundary of $[0,C]$.

\begin{lemma}\label{rightleft}
Let $\varphi:[0,C]\to \R^2$ be an $L$-biLipschitz function, short on some $[a,b]\subset\subset [0,C]$, and assume that for some $\eps\ll 1$ the function $\varphi$ is linear on $(a-\eps,a)$ with $|\varphi'|\equiv L$. Then, $\varphi'$ is right-continuous in $a$.
\end{lemma}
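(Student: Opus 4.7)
The plan is to adapt the argument of Lemma~\ref{shortonC1} to the boundary point $a$, exploiting the linearity of $\varphi$ on $(a - \eps, a)$. Write $\nu_0 = \varphi'(a^-)/L \in \S^1$; since Lemma~\ref{shortonC1} already gives $\varphi \in C^1(a, b)$, the task reduces to showing $\varphi'(t)/L \to \nu_0$ as $t \to a^+$. I fix $\eta \ll 1$ small with respect to $L - 1$ and $1/L$, take $\ell > 0$ so that $\segm{\varphi(p)\varphi(q)} \geq 2\ell$ whenever $p \in [0, a - \eps/2]$ and $q \in [a, (a + b)/2]$ (possible by the $L$-biLipschitz property), and let $\bar\delta = \bar\delta(\ell, \eta, L)$ be given by Lemma~\ref{3.4}, chosen much smaller than $\ell$. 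I aim to prove $|\varphi'(t)/L - \nu_0| \leq C\sqrt\eta$ for every $t \in (a, a + \bar\delta/(12L))$, with $C$ depending only on $L$; letting $\eta \to 0$ then gives right-continuity.

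The main construction takes $Q := \varphi(a)$ and $R := \varphi(r)$, with $r > a$ chosen so that $\segm{QR} = \bar\delta$, and denotes by $\nu$ the direction of the segment $QR$. If $\varphi$ is already a straight segment on $[a, r]$, then $\varphi'/L \equiv \nu$ on $(a, r)$ and the task reduces to showing $|\nu - \nu_0| \leq C\sqrt\eta$, which is the final step below. Otherwise, Lemma~\ref{ifnotshort} applied to the modification interval $(r_0, s_0) = (a, r) \subset [a, b']$ yields a pair $(p, t)$, with $p \notin [a, b - (r - r^+)]$ and $t \in (a, r^+)$, at which the inverse $L$-Lipschitz property of $\varphi_{a, r}^+$ fails. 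The crucial observation is that $p$ cannot lie in the linear part $(a - \eps, a)$ with $\segm{PQ} \geq \ell$: writing $P = \varphi(a) + L(p-a)\nu_0$, the failing inequality becomes
\[
L^2\bigl|(t - a)\nu + (a - p)\nu_0\bigr| < (t - a) + (a - p),
\]
whose expansion forces $\nu \cdot \nu_0 < 2/L^4 - 1$; yet Lemma~\ref{smallone}'s estimate~(\ref{prop3}) (applicable since $\segm{PQ} \geq \ell$) gives $\nu \cdot \nu_0 \geq 1/L^2 - 2\eta/L^2$, which for $L > 1$ and small $\eta$ strictly exceeds $2/L^4 - 1$, a contradiction. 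Therefore Lemma~\ref{ifnotshort}'s failing $p$ either lies outside the linear part (and we immediately have $\segm{PQ} \geq 2\ell$) or is so close to $a$ that $\segm{PQ} < \ell$; a careful reading of the failure region (which is a scale-invariant cone in $(t - a, a - p)$) shows that in the latter case one can select a different failing pair with $\segm{PQ} \geq \ell$, recovering the previous case. Then Lemma~\ref{smallone} yields the arc estimate~(\ref{prop5}): $\H^1(\arc{QR}) \leq (1 + 6\eta)\,\segm{QR}$.

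Once~(\ref{prop5}) is in hand, the pointwise control $|\varphi'(t)/L - \nu| \leq 16\sqrt\eta$ on a right-neighborhood $(a, r^-)$ of $a$ follows verbatim as in Lemma~\ref{shortonC1}: defining $R^-$ as the last point of $\arc{QR}$ at distance $\bar\delta/3$ from $R$, any hypothetical violation produces by continuity two points $Q', Q''$ of $\arc{QR}$ with direction $\nu'$ satisfying $|\nu' - \nu| = 16\sqrt\eta$, and Lemma~\ref{bigone} (whose hypothesis $\segm{\widetilde P Q} \geq 2\ell$ for the resulting failing $\widetilde P$ is verified exactly as above) forces $|\nu - \nu'| \leq 15\sqrt\eta$, contradicting the construction.

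\emph{The main obstacle} is the remaining step of linking $\nu$ with $\nu_0$. Lemma~\ref{smallone}'s estimate~(\ref{prop3}) pins $\nu$ to the direction from a failing $P$ to $Q$, but since $P$ is forced outside the linear part this direction generically differs from $\nu_0$. My plan is to rerun the construction once more with $Q_\star := \varphi(a - \bar\delta/(3L))$ (now inside the linear part) in place of $Q$: the direction from any point $P$ further in the linear part to $Q_\star$ is then exactly $\nu_0$, so (\ref{prop3}) applied in this second setup ties the new direction $\nu_\star$ of $Q_\star R_\star$ (for a suitable $R_\star$ past $a$) directly to $\nu_0$. Combining this with the direction-control on $\varphi'$ past $a$ from the previous paragraph, through the trigonometric bookkeeping in Lemma~\ref{3.4} applied to the broken-line geometry at $\varphi(a)$, should yield $|\nu - \nu_0| \leq C\sqrt\eta$. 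The delicacy is that the various small angles, all of order $\sqrt\eta$, must combine without blowing up the constant $C$, especially as $L$ ranges in $(1, \infty)$; the scale-invariance step that rules out $p$ very close to $a$ is the most fragile part and requires $\bar\delta$ to be genuinely much smaller than $\ell$ in a quantitative manner depending on $L$.
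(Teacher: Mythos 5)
Your proposal targets the wrong limit, and this is not a fixable gap: the statement you propose to prove is generically false.

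You set $\nu_0=\varphi'(a^-)/L$ and declare that ``the task reduces to showing $\varphi'(t)/L\to\nu_0$ as $t\to a^+$.'' But right-continuity of $\varphi'$ at $a$ only requires that the one-sided limit $\lim_{t\to a^+}\varphi'(t)$ \emph{exist} (and agree with the right derivative of $\varphi$ at $a$, which is automatic once the limit exists). It does not require this limit to equal $\varphi'(a^-)=L\nu_0$. A short $L$-biLipschitz curve can perfectly well have a corner at $a$: the incoming direction $\nu_0$ and outgoing direction $\bar\nu$ need only satisfy a lower bound on the angle between them, not be equal. This is precisely the content of Step~I of the paper's proof, which shows that the direction of the segment $A\varphi(r)$ lies anywhere in the arc $\big[-(\pi-\bar\theta),\,\pi-\bar\theta\big]$ with $\bar\theta=2\arcsin(1/L^2)$ and the reference direction $e_1=\nu_0$. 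In other words the outgoing direction $\bar\nu$ can differ from $\nu_0$ by any angle up to $\pi-\bar\theta>0$. Your ``main obstacle'' paragraph, which attempts to force $|\nu-\nu_0|\lesssim\sqrt\eta$ by running the machinery once more with $Q_\star$ inside the linear part, can therefore never succeed: applying~(\ref{prop3}) with $P,\,Q_\star$ both in the linear part only pins $\theta\cdot\nu_\star=\nu_0\cdot\nu_\star$ to a value close to $1/L^2$, which constrains the \emph{angle} between $\nu_0$ and $\nu_\star$ to be near $\arccos(1/L^2)$, and says nothing that would make $\nu_\star$ (or $\nu$) collapse onto $\nu_0$.

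The genuinely hard part, which your proposal omits, is what the paper does in Step~II: showing that the set of limiting directions of the segments $A\varphi(t)$ as $t\searrow a$ is a \emph{single point} $\bar\nu$ (a priori it is a connected arc in $\S^1$). This uniqueness is obtained by a compactness/contradiction argument that, for two distinct limit directions $\nu_1,\nu_2$, produces points $P^1,P^2$ achieving equality in~(\ref{titaglio}), (\ref{ledita}) and then exploits Lemma~\ref{3.4} to reach a contradiction. Only after $\bar\nu$ is identified does the $\varphi'(t)\to L\bar\nu$ argument (your paragraph invoking Lemma~\ref{bigone}, which is essentially the paper's Step~III) apply. A secondary issue: your claim that a failing pair with $\segm{PQ}<\ell$ can be rescaled by scale-invariance of the cone to one with $\segm{PQ}\geq\ell$ does not go through, because the rescaled $t$ must still lie in $(a,r^+)$, which limits the allowed scaling factor by $\bar\delta/\ell\ll 1$; but the cone's aperture is bounded below by a constant depending only on $L$, so the rescaled pair generally falls outside $(a,r^+)$. (In any case this detour is unnecessary: the inequality $\nu\cdot\nu_0<2/L^4-1$ that you derive from the failing pair already contradicts the $L$-biLipschitz property of $\varphi$ itself by the very computation in the paper's Step~I, with no need of~(\ref{prop3}).)
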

\begin{proof}
Up to a rotation, we can assume for simplicity also that $\varphi$ is horizontal in $(a-\eps,a)$ or, in other words, that $\varphi' = L{\rm e}_1$ in $(a-\eps,a)$. Since by definition $|\varphi'|=L$ on the whole $(a,b)$, we have to find a direction $\bar\nu\in\S^1$ such that the directions of $\varphi'(t)$ converge to $\bar\nu$ when $t\searrow a$. First of all, let us define $\bar\theta = 2\arcsin (1/L^2)$, and notice that $\bar\theta$ converges to $\pi$ (resp., to $0$) if $L$ converges to $1$ (resp., to $+\infty$).
\step{I}{For every $r\in (a,a+\eps)$, the direction of the segment $AR$ is between $-(\pi-\bar\theta)$ and $\pi-\bar\theta$.}
Let us take a generic $r\in(a,a+\eps)$, define $\sigma=\segm{AR}/L$, and notice that
\begin{equation}\label{nsd}
L(r-a) = \H^1(\arc{AR})\geq \segm{AR}\,, \qquad \hbox{thus} \qquad r-a\geq \sigma\,.
\end{equation}
Call now $P=\varphi(p)=\varphi(a-\sigma)$, and notice that $\segm{PA}=\segm{AR}$ by construction, since the above inequality in particular ensures that $\sigma<\eps$. If we assume, by contradiction, that the direction of $AR$ is not between $-(\pi-\bar\theta)$ and $\pi-\bar\theta$, then in particular $\angle PAR<\bar\theta$. As a consequence, by~(\ref{nsd}) we have
\[
\segm{PR} < 2 \segm{AR}\sin (\bar\theta/2) = \frac{2\segm{AR}}{L^2} = \frac {2\sigma}L \leq \frac{r-p}L\,,
\]
and this gives a contradiction to the $L$-biLipschitz property of $\varphi$. Hence, the first step is concluded.

\step{II}{There exists $\bar\nu \in \big[-(\pi-\bar\theta),\pi-\bar\theta\big]$ such that the direction of $A\varphi(t)$ converges to $\bar\nu$ when $t\searrow a$.}
By compactness of $\S^1$, the directions of the segments $A\varphi(t)$ have at least a limit point for $t\searrow a$: the goal of this step is to show that there is actually only a single such limiting point.\par

Let us assume that the limiting directions are more than one: since the set of the limiting directions is clearly a connected subset of $\S^1$, which can only contain directions between $-(\pi -\bar\theta)$ and $\pi -\bar\theta$ by Step~I, we deduce in particular the existence of $\nu_1,\,\nu_2\in \S^1$, and of two sequences of points $R^i_n=\varphi(r^i_n)$ for $i\in \{1,\,2\}$ and $n\in\N$ satisfying
\begin{align*}
\nu_1,\, \nu_2 \in \big(-(\pi-\bar\theta),\pi-\bar\theta\big),\, &&
\nu_1 \neq \nu_2\,, &&
r^i_n \mathop{\searrow}\limits_{n\to \infty} a\,, &&
\frac{R^i_n-A}{|R^i_n-A|} = \nu_i\,.
\end{align*}
Let us now fix a constant $\ell$ much smaller than $L\eps$. For every $\eta>0$, calling for brevity $r=r^1_n$ and $R=R^1_n$, as soon as $n$ is big enough we have that $\segm{AR}$ is smaller than $\bar\delta(\ell,\eta,L)$. As a consequence, since of course $\varphi$ is not linear between $a$ and $r$ but it is short on $(a,b)$, Lemma~\ref{ifnotshort} implies that the inverse $L$-Lipschitz property for the function $\varphi_{ar}^+$ must fail for some pair $P_n=\varphi^+_{ar}(p_n)$ and $S=\varphi^+_{ar}(s)$, where $p_n\notin [a,b-(r-r^+)]$ while $s\in (a,r^+)$. Notice now the following simple general trigonometric fact: given two directions $\alpha_1,\,\alpha_2\in\S^1$, the map $\tau:\R\to\R^2$ defined as $\tau(x)=Lx\alpha_1$ for $x\geq 0$ and $\tau(x)=-Lx\alpha_2$ for $x<0$ is $L$-biLipschitz if and only if the angle between $\alpha_1$ and $\alpha_2$ is at least the angle $\bar\theta$ defined above. Thus, Step~I and the fact that $\nu_1\in \big(-(\pi-\bar\theta),\pi-\bar\theta\big)$, ensure that $p_n$ cannot belong to $(a-\eps,a)$ if $r<a+\eps$, which is of course true up to have taken $n$ big enough. As a consequence, we can apply Lemma~\ref{smallone} to the points $P=P_n,\, Q=A$ and $R$, and we get that
\begin{align*}
\bigg|\theta_n\cdot \nu_1 - \frac 1 {L^2} \bigg| \leq 2\,\frac\eta{L^2}\,, &&
\segm{AP_n} \leq \frac {|a-p_n|} L + \frac{3\eta\segm{AR}}{L^2}\,.
\end{align*}
where $\theta_n$ is the direction of the segment $P_nA$. If we now send $\eta\to 0$ and consequently $n\to \infty$, we find a point $P^1=\varphi(p^1)$ (which is a limit of some subsequence of the points $P_n$) such that, calling $\theta^1$ the direction of $P^1A$, it is
\begin{align}\label{titaglio}
\theta^1\cdot \nu_1 =\frac 1 {L^2}\,, &&
\segm{AP^1} =\frac {|a-p^1|} L\,.
\end{align}
The very same argument, using the direction $\nu_2$ in place of $\nu_1$, gives us of course another point $P^2=\varphi(p^2)$ satisfying
\begin{align}\label{ledita}
\theta^2\cdot \nu_2 =\frac 1 {L^2}\,, &&
\segm{AP^2} =\frac {|a-p^2|} L\,,
\end{align}
where $\theta^2$ is the direction of $P^2A$. Again recalling that the set of the limiting directions, among which we have chosen $\nu_1$ and $\nu_2$, is a connected subset of $\S^1$, from~(\ref{titaglio}), (\ref{ledita}) and the fact that $\nu_1\neq \nu_2$ we get that, up to swap $\nu_1$ and $\nu_2$,
\begin{equation}\label{poverine}
\theta^1\cdot \nu_2 < \frac {1-\eta}{L^2}
\end{equation}
for some strictly positive constant $\eta$. Let us now again select a point $R=R^2_n$ with $n$ big enough so that $\segm{AR}\leq \bar\delta(\ell,\eta,L)$, and let us assume that $p^1<a$ (otherwise one just has to repeat the argument below swapping the roles of $A$ and $R$). Recalling that $\varphi$ is $L$-biLipschitz, so in particular the inverse $L$-Lipschitz property holds for the points $p^1$ and $r=r^2_n$, using again Lemma~\ref{3.4} with $P=P^1$, $Q=A$ and $S=R$, and by~(\ref{titaglio}) and~(\ref{poverine}), we have
\[\begin{split}
\frac{\segm{AR}}{L^2} &\leq \frac{r-a}L = \frac{r-p^1}L-\frac{a-p^1}L 
\leq \segm{P^1R}-\frac{a-p^1}L 
\leq \segm{P^1A} + \segm{AR}\bigg(\theta^1\cdot \nu_2 + \frac\eta{L^2}\bigg)-\frac{a-p^1}L \\
&= \segm{AR}\bigg(\theta^1\cdot \nu_2 + \frac\eta{L^2}\bigg)
<\frac{\segm{AR}}{L^2}\,,
\end{split}\]
and the contradiction shows the uniqueness of the direction $\nu$, hence this step is concluded.

\step{III}{Conclusion.}
We are now ready to conclude the proof. In Step~II we have already found a direction $\bar\nu$ such that
\[
\frac{A-\varphi(t)}{|A-\varphi(t)|} \to \bar\nu
\]
for $t\searrow a$. Hence, we only have to show that $\varphi'(t) \to L\bar\nu$ for $t\searrow a$: our argument will be very similar to the one of Lemma~\ref{shortonC1}. Call again $\ell$ a constant much smaller than $L\eps$, fix arbitrarily some $\eta\ll 1$, and consider the first portion of the curve $\varphi$, after $A$, of a length $\bar\delta(\ell,\eta,L)$. We claim that for any point $\varphi(t)$ in this piece of curve one has
\begin{equation}\label{ultip}
|\varphi'(t) - L\bar\nu| \leq 16\sqrt\eta\,.
\end{equation}
Once we prove this, since $\eta$ is arbitrary the proof is concluded. Assume then the existence of some $t$ as before for which~(\ref{ultip}) is not satisfied, take a point $R=\varphi(r)$, with $r>a$, such that $\segm{AR}=2\segm{AT}$, and take two more points $Q^+=\varphi(q^+)$ and $R^-=\varphi(r^-)$ with $a<q^+<r^-<r$ so that
\[
\segm{AQ^+} = \segm{R^-R} = \frac{\segm{AR}}3\,.
\]
The existence of $t$ implies that $\varphi$ is not linear between $A$ and $R$. Therefore, we can apply once again first Lemma~\ref{ifnotshort} and then Lemma~\ref{smallone} with $Q=A$, in particular getting the validity of~(\ref{prop5}). Exactly as in the proof of Lemma~\ref{shortonC1}, this implies that the direction $\nu$ of the segment $Q^+R^-$ satisfies
\[
|\nu-\bar\nu| < 16\sqrt\eta\,,
\]
hence by continuity we can find two points $Q'=\varphi(q')$ and $Q''=\varphi(q'')$ with $q^+<q'<q''<r^-$ with $|\nu'-\bar\nu|=16\sqrt\eta$, being $\nu'$ the direction of $Q'Q''$. And finally, the points $Q'$ and $Q''$ give a contradiction with~(\ref{oscest}) of Lemma~\ref{bigone}. This contradiction show the validity of~(\ref{ultip}), and then the proof is concluded.
\end{proof}

\begin{lemma}\label{nowlast}
Let $\varphi:[0,C]\to \R^2$ be an $L$-biLipschitz function, short on some interval $[a,C]$. Then, $\varphi$ is of class ${\rm C}^1$ on the interval $(a,C]$.
\end{lemma}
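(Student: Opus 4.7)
Since Lemma~\ref{shortonC1} already yields $\varphi\in{\rm C}^1((a,C))$, it only remains to show that $\varphi'$ admits a left limit at $C$. The plan is to adapt the proof of Lemma~\ref{shortonC1} to the right-endpoint setting, exploiting a useful structural simplification: since here $b=b'=C$, the domain of every auxiliary function $\varphi^+_{rs}$ with $a\leq r<s\leq C$ is exactly $[0,C-(s-s^+)]=[0,b'']$, which forces any obstruction point $p\notin[a,b'']$ produced by Lemma~\ref{ifnotshort} to lie automatically in $[0,a)$. Hence no analog of Step~I of Lemma~\ref{rightleft} is needed.

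Fix $\ell$ small enough that $\segm{\varphi(p)\varphi(q)}\geq 2\ell$ whenever $p\in[0,a]$ and $q\in[(a+C)/2,C]$---possible by the $L$-biLipschitz property. For each small $\eta>0$, set $\bar\delta=\bar\delta(\ell,\eta,L)$, pick $q_\eta$ with $\segm{\varphi(q_\eta)\varphi(C)}=\bar\delta$ (so $q_\eta\in[C-L\bar\delta,C-\bar\delta/L]$ lies above $(a+C)/2$ for $\bar\delta$ small), and write $Q=\varphi(q_\eta)$, $Z=\varphi(C)$, $s_\eta=(q_\eta+C)/2$, and $\nu_\eta=(Z-Q)/|Z-Q|$. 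The central claim, proved exactly as in Lemma~\ref{shortonC1}, is
\[
\Bigl|\frac{\varphi'(t)}{L}-\nu_\eta\Bigr|\leq 16\sqrt{\eta} \qquad \forall\,t\in\bigl(s_\eta-\tfrac{\bar\delta}{12L},\, s_\eta+\tfrac{\bar\delta}{12L}\bigr).
\]
Indeed, if $\varphi$ is not a straight line between $Q$ and $Z$, Lemma~\ref{ifnotshort} applied to $\varphi^+_{q_\eta C}$ furnishes an obstruction $p<a$, Lemma~\ref{smallone} delivers property~(\ref{prop5}), and if the claim failed at some $t$, continuity yields intermediate points $Q'=\varphi(q')$, $Q''=\varphi(q'')$ between the ``third-distance'' points $Q^+, R^-$ of $\arc{QZ}$ with $|\nu'-\nu_\eta|=16\sqrt\eta$, where $\nu'$ is the direction of $Q'Q''$; Lemma~\ref{ifnotshort} applied to $\varphi^+_{q'q''}$ then supplies another obstruction $\tilde p<a$, and Lemma~\ref{bigone} forces $|\nu_\eta-\nu'|\leq 15\sqrt\eta$, a contradiction.

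To conclude, pick a decreasing sequence $\eta_n\to 0$ so that the intervals $I_n=(s_{\eta_n}-\bar\delta_n/(12L),\, s_{\eta_n}+\bar\delta_n/(12L))$ consecutively overlap; then $\bigcup_n I_n=(s_*,C)$ for some $s_*\in(a,C)$, and for $t\in I_n\cap I_{n+1}$ the bound above gives $|\nu_{\eta_n}-\nu_{\eta_{n+1}}|\leq 16(\sqrt{\eta_n}+\sqrt{\eta_{n+1}})$. Hence $(\nu_{\eta_n})$ is Cauchy and converges to some $\bar\nu\in\S^1$, and consequently $\varphi'(t)/L\to\bar\nu$ as $t\nearrow C$, which is the required continuous extension. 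The main subtlety is bookkeeping the geometric hypotheses of Lemmas~\ref{smallone} and~\ref{bigone}---in particular the lower bound $\segm{PQ}\geq 2\ell$ for every obstruction point $P=\varphi(p)$ with $p<a$, and the distance conditions on $Q',Q''$ relative to $Q,Z$---all of which are guaranteed by the a priori choice of $\ell$ and the standard third-distance construction.
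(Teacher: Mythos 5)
Your structural observations are correct and useful: since $b=b'=C$ in this lemma, the obstruction point produced by Lemma~\ref{ifnotshort} for $\varphi^+_{rs}$ indeed lives automatically in $[0,a)$, and your ``central claim'' $|\varphi'(t)/L-\nu_\eta|\leq 16\sqrt\eta$ on the interval centered at $s_\eta=(q_\eta+C)/2$ is a faithful adaptation of the argument in Lemma~\ref{shortonC1}, with all hypotheses of Lemmas~\ref{smallone} and~\ref{bigone} correctly checked. You also correctly anticipate that once a limiting direction $\bar\nu$ for the $\nu_{\eta_n}$ is secured, the pointwise estimate plus $\sup I_n\to C$ yields $\varphi'(t)/L\to\bar\nu$ as $t\nearrow C$.

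However, the passage from the consecutive-overlap bound $|\nu_{\eta_n}-\nu_{\eta_{n+1}}|\leq 16(\sqrt{\eta_n}+\sqrt{\eta_{n+1}})$ to ``\emph{hence $(\nu_{\eta_n})$ is Cauchy}'' is a genuine gap. Consecutive differences tending to $0$ do not imply Cauchy --- you need the tail sums $\sum_{k\geq n}\sqrt{\eta_k}$ to vanish, i.e.\ $\sum_n\sqrt{\eta_n}<\infty$. Whether such a summable sequence can \emph{also} be chosen so that the intervals $I_n=(s_{\eta_n}-\bar\delta_n/(12L),\,s_{\eta_n}+\bar\delta_n/(12L))$ consecutively overlap is not automatic: the overlap condition forces $\bar\delta(\eta_{n+1})/\bar\delta(\eta_n)$ to stay above a fixed constant (roughly $5/7$, since $C-s_\eta\approx\bar\delta(\eta)/(2L)$ while the half-width is $\bar\delta(\eta)/(12L)$), so one simultaneously needs $\bar\delta(\eta_n)$ to decrease geometrically \emph{and} $\sqrt{\eta_n}$ to be summable. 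This is only possible if one has quantitative control of the map $\eta\mapsto\bar\delta(\ell,\eta,L)$ --- roughly, that it can be taken at least linear in $\eta$ --- which the statement of Lemma~\ref{3.4} does not supply (its proof only says ``by continuity''). The paper sidesteps exactly this issue by arguing in two separate stages: Step~I shows the chord directions $\nu_q$ of $\varphi(q)\varphi(C)$ converge via a one-shot contradiction (two far-apart limiting directions $\nu,\nu'$ with $b-q'\ll b-q$, combined with Lemma~\ref{smallone} and Lemma~\ref{3.4}, produce an arithmetic contradiction), and then Step~II promotes this to convergence of $\varphi'$ using Lemma~\ref{bigone}. To make your chaining work, you would either have to supply the quantitative lower bound $\bar\delta(\ell,\eta,L)\gtrsim\eta$ (which \emph{is} true and extractable from the proof of Lemma~\ref{3.4}, but must be stated), or replace the Cauchy argument by the paper's direct contradiction.
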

\begin{proof}
The ${\rm C}^1$ regularity on the open interval $(a,C)$ has been already proved in Lemma~\ref{shortonC1}, thus we only have to take care of the situation near $b=C$. Our argument will be quite similar to what already done in the proof of Lemmas~\ref{bigone} and~\ref{shortonC1}, and is divided in two steps for simplicity.
\step{I}{The directions of the segments $QB$ converge to some $\nu\in \S^1$.}
First of all, we consider the segments $QB$, where as usual $Q=\varphi(q)$ and $B=\varphi(b)$. We aim to show that the directions of the segments $QB$ converge to some $\nu\in\S^1$ when $q\to b$. Suppose that this is false and notice that, by compactness, this means that the limit points of the directions of the segments $QB$ when $q\to b$ are a connected subset of $\S^1$ made by more than one point. We can then fix a distance $\ell$ such that $\segm{PB}\gg \ell$ for every $P=\varphi(p)$ and $p<a$, and $\eta$ much smaller than the diameter of the set of the limiting directions just mentioned. Let us now pick any $q<b$, with $q$ very close to $b$ so that $\segm{QB} \leq \bar\delta(\ell,\eta,L)$; the function $\varphi$ is of course not a segment between $q$ and $b$, thus the function $\varphi^+_{qb}$ does not satisfy the inverse $L$-Lipschitz property at some pair $(p,t)$ with $p<a$ and $q<t<b$, so we can apply Lemma~\ref{smallone} and in particular we get
\begin{align}\label{punto1}
\Big|\theta\cdot \nu - \frac 1 {L^2}\Big| \leq 2\,\frac\eta{L^2}\,, && 
\frac{\H^1(\arc{QB})}{\segm{QB}} \leq 1+6\eta\,, &&
\segm{PQ} \leq \frac {q-p}L + \frac{3\eta\segm{QB}}{L^2}\,,
\end{align}
where $\theta$ and $\nu$ are the directions of the segments $PQ$ and $QB$ respectively. The very same argument can be applied to some other $Q'$ near $B$, getting another point $P'$ and 
\begin{align}\label{punto2}
\Big|\theta'\cdot \nu' - \frac 1 {L^2}\Big| \leq 2\,\frac\eta{L^2}\,, && 
\frac{\H^1(\arc{Q'B})}{\segm{Q'B}} \leq 1+6\eta\,, &&
\segm{P'Q'} \leq \frac {q'-p'}L + \frac{3\eta\segm{Q'B}}{L^2}\,,
\end{align}
being $\theta'$ and $\nu'$ the directions of $P'Q'$ and $Q'B$ respectively. Recall now that we are assuming that the limiting directions of the segments $QB$ form a nontrivial arc of $\S^1$: as a consequence, similarly as in Step~II of the proof of Lemma~\ref{rightleft}, we can select two such directions $\nu,\,\nu'$, assume by symmetry that
\begin{equation}\label{punto2.5}
\theta' \cdot \nu > \frac 1{L^2} + 10\,\frac \eta {L^2}\,,
\end{equation}
and chose two points $Q,\, Q'$ corresponding to the directions $\nu$ and $\nu'$ in such a way that $b-q' \ll b-q$, so that $\H^1(\arc{QQ'})\approx \H^1(\arc{QB})$ and then also using~(\ref{punto1}) we get
\begin{align}\label{punto3}
\segm{Q'B} \leq \frac{\segm{QQ'}}3\,, &&
\segm{QQ'} \geq \H^1(\arc{QQ'})(1-6\eta)\,, &&
|\tilde\nu - \nu|\leq \frac \eta{L^2}\,,
\end{align}
where $\tilde\nu$ is the direction of $QQ'$. Using then the estimates~(\ref{punto3}), (\ref{punto2}) and~(\ref{punto2.5}), and applying Lemma~\ref{3.4} to the points $P'$, $Q'$ and $Q$, we obtain
\[\begin{split}
\frac {q'-p'}L + \frac{\eta\segm{QQ'}}{L^2} &\geq
\frac {q'-p'}L + \frac{3\eta\segm{Q'B}}{L^2} \geq \segm{P'Q'}
\geq \segm{P'Q} + \segm{QQ'} \bigg(\theta'\cdot \tilde\nu - \frac \eta{L^2} \bigg)\\
&\geq \frac{q-p'}L + \segm{QQ'} \bigg(\theta'\cdot \nu - 2\,\frac \eta{L^2} \bigg)
\geq \frac{q-p'}L + \segm{QQ'} \bigg(\frac 1{L^2}+ 8\,\frac \eta{L^2} \bigg)\,,
\end{split}\]
which implies, again recalling~(\ref{punto3}),
\[
\frac {q'-q}L \geq \segm{QQ'} \bigg(\frac 1{L^2}+ 7\,\frac \eta{L^2} \bigg)
\geq \frac{\H^1(\arc{QQ'})}{L^2}(1-6\eta)(1+7\eta)
= \frac{q'-q}L (1+\eta-42\eta^2)\,,
\]
which is impossible as soon as $\eta$ was chosen small enough. This concludes the proof of this step.

\step{II}{The derivative $\varphi'(q)$ converge to $L\nu$.}
In order to conclude the proof, we have now to check that $\varphi'(q)\to L\nu$ when $q\to b$, being $\nu$ the direction found in Step~I. Suppose that this is not the case; then, since we already know that $\varphi'$ is continuous, with $|\varphi'|=L$, on the open interval $(a,b)$ by Lemma~\ref{shortonC1}, the set of limiting directions of the vectors $\varphi'(t)/L$ with $t\to b$ is a non-trivial arc of $\S^1$, containing of course the direction $\nu$ found in Step~I. Let us then pick a direction $\tilde\nu\neq \nu$ in the interior of this arc, satisfying
\[
|\nu-\tilde\nu| \leq \frac 14\, \min \bigg\{ \frac 1{L^2}, 1- \frac 1{L^2}\bigg\}\,,
\]
and let us select $\ell,\, \eta>0$ in such a way that $\segm{PB}\gg \ell$ for every $P=\varphi(p)$ and $p<a$, and that
\begin{equation}\label{seclas}
|\nu-\tilde\nu| > 16 \sqrt \eta\,.
\end{equation}
Let now $t<b$ be a point such that $b-t\ll \bar\delta(\ell,\eta,L)/L$, $\varphi'(t) =L \tilde\nu$ (which is possible since $\tilde\nu$ is in the interior of the arc containing all the limiting directions), and also such that
\begin{equation}\label{thilas}
\bigg|\frac{B-S}{\segm{SB}}-\nu \bigg| \leq \frac\eta{L^2} \qquad \forall\, S:\, \segm{SB} \leq 3 \segm{TB}\,;
\end{equation}
this last estimate is of course admissible thanks to Step~I. Moreover, let us fix $q<t$ so that $\segm{QB}=2\segm{TB}$. Since $\varphi$ cannot be a segment between $q$ and $b$, keeping in mind Lemma~\ref{ifnotshort} we can apply as usual Lemma~\ref{smallone} with $R=B$ and in particular we get that~(\ref{prop5}) holds true. Now, let $q'<t<q''$ be two points such that the direction $\nu'$ of the segment $Q'Q''$ satisfies
\begin{align}\label{foulas}
|\nu-\nu'| = 16 \sqrt\eta\,, && |\nu-\nu'| \leq \frac 12\, \min \bigg\{ \frac 1{L^2}, 1- \frac 1{L^2}\bigg\}\,.
\end{align}
Notice that such two points surely exist, thanks to~(\ref{seclas}) and the fact that the direction of $QB$ is very close to $\nu$ by~(\ref{thilas}). Moreover, since $\eta\ll |\nu-\nu'|$ and since the curve $\arc{QB}$ is very close to the segment $QB$ by the validity of~(\ref{prop5}), we find that $\segm{Q'Q''}\ll \delta=\segm{QB}$, hence in particular
\begin{align*}
\segm{QQ'}\geq \frac \delta 3\,, && \segm{QQ''}\geq \frac \delta 3\,, &&
\segm{Q'B}\geq \frac \delta 3\,, && \segm{Q''B}\geq \frac \delta 3\,.
\end{align*}
Finally, since~(\ref{thilas}) and~(\ref{foulas}) give that $\tilde\nu\neq \nu'$, and thus that $\varphi$ is not a segment between $Q'$ and $Q''$, and then by Lemma~\ref{ifnotshort} the inverse $L$-Lipschitz property for $\varphi^+_{q'q''}$ must fail for some $P=\varphi(p)$ with $p<a$ and some point between $Q'$ and $Q''$, we can apply Lemma~\ref{bigone} with $R=B$. However, (\ref{foulas}) is clearly in contradiction with~(\ref{oscest}), and this shows that $\varphi'(q)\to L\nu$ for $q\to b$, as desired.
\end{proof}

Let us conclude this section by simply observing that Proposition~\ref{nLI} is an immediate consequence of Lemma~\ref{shortonC1}, Lemma~\ref{rightleft} and Lemma~\ref{nowlast} (and the symmetric counterparts of the last two): indeed, the internal regularity is given by Lemma~\ref{shortonC1}, while the regularity up the boundary is achieved applying Lemma~\ref{rightleft} or Lemma~\ref{nowlast} for points of the boundary which are in $(0,1)$ or in $\{0,1\}$.

\section{Proof of Theorem~\ref{main}\label{sect4}}

This section is devoted to show Theorem~\ref{main}. The idea is simply to put together Proposition~\ref{step1} and Proposition~\ref{nLI}; with the first result, one gets a bi-Lipschitz function which is piecewise linear on most of $[0,1]$, and then the second result allows to modify the function on the small regions which are left out.

\begin{proof}[Proof of Theorem~\ref{main}]
We take a small constant $\xi=\xi(L,\eps)$, to be specified at the end. For the sake of simplicity, we divide the proof in a few steps.
\step{I}{The function $\varphi_1$ from Proposition~\ref{step1}.}
We start by applying Proposition~\ref{step1} to $\varphi$, thus getting an $(L+\xi)$-biLipschitz function $\varphi_1:[0,1]\to\R^2$, which satisfies~(\ref{claimmain}) and which is piecewise linear on a finite union $A$ of closed intervals which cover a portion of length at least $1-\xi$ of the whole $[0,1]$. Let us then write
\[
[0,1]\setminus A = \bigcup\nolimits_{i=1}^N J_i\,,
\]
where the $J_i$'s are a finite number of open intervals, satisfying
\[
\sum_{i=1}^N |J_i| = 1 - |A| \leq \xi\,.
\]

\step{II}{The function $\varphi_2$, which ``goes fast'' near the intervals $J_i$.}
In this step, we make a simple modification of $\varphi_1$, in order to be able to apply Proposition~\ref{nLI} later. To do so, we recall that the function $\varphi_1$ is finitely piecewise linear on a subset $A$ of $[0,1]$; hence, we can define a very small length $\ell>0$ such that any interval (contained in $A$) where $\varphi_1$ is linear, is much longer than $\ell$. This is of course possible since such intervals are finitely many. In particular, the distance between any two consecutive ``bad'' intervals $J_i$ and $J_{i+1}$ is always much larger than $\ell$. Up to further decrease $\ell$, we can also assume that
\begin{equation}\label{smallell}
2\ell N < \xi\,.
\end{equation}
Let us now define $A^-$ the subset of $[0,1]$ made by all the points of $A$ which have distance from $[0,1]\setminus A$ smaller than $\ell$: by construction, $A^-$ is a union of either $2N-2$, or $2N-1$, or $2N$ small subintervals of $A$, on each of which $\varphi_1$ is linear by construction: the exact number depends on whether $0$ and/or $1$ belong to $A$ or not. Let us now consider the function $\tau:[0,1]\to [0,C]$ identified by
\begin{align*}
\tau(0)=0\,, && \tau'(x) = 1 \quad \forall\, x\notin A^-\,, && \tau'(x) = \frac {|\varphi_1'(x)|}{L+\xi} \quad \forall\, x\in A^-\,.
\end{align*}
It is immediate from the definition to observe that $\tau'(x)\leq 1$ for every $x$, and that
\[
1 - 2N\ell\leq C \leq 1\,,
\]
which also by~(\ref{smallell}) implies
\[
x-\xi \leq \tau(x) \leq x \qquad \forall\, 0\leq x\leq 1\,,
\]
so that in particular
\begin{equation}\label{star}
1 - \xi \leq \tau(1) = C\,.
\end{equation}
Finally, we define the function $\varphi_2: [0,C]\to \R^2$ as
\[
\varphi_2(x) = \varphi_1\big(\tau^{-1}(x)\big)\,.
\]
Since $\tau'\leq 1$, as already observed the inverse Lipschitz property for $\varphi_2$ works better as than for $\varphi_1$, hence $\varphi_2$ satisfies the $L$ inverse Lipschitz property because so does $\varphi_1$. On the other hand, $|\varphi_2'(x)| = |\varphi_1'(\tau^{-1}(x))|\leq L$ as soon as $\tau^{-1}(x)\notin A^-$\,, while otherwise $|\varphi_2'(x)| =L+\xi$. As a consequence, also the $(L+\xi)$-Lipschitz property for $\varphi_2$ follows. Summarizing, the function $\varphi_2:[0,C]\to \R^2$ is an $(L+\xi)$-biLipschitz function, which is finitely piecewise linear on the whole $[0,C]$ except on $N$ intervals $\widetilde J_i$, where each $\widetilde J_i$ is simply $\tau(J_i)$, and so $|\widetilde J_i|=|J_i|$. Observe that, by construction, the function $\varphi_2$ is linear, with $|\varphi_2'|=L+\xi$, for a short while before and after each of the intervals $\widetilde J_i$, except of course before $\widetilde J_1$ if $0\in \widetilde J_1$, and after $\widetilde J_N$ if $C\in \widetilde J_N$. We conclude this step with a couple of observations. First of all, by construction, $\varphi_2$ is finitely piecewise linear on a subset $\widetilde A$ of $[0,C]$ which satisfies
\begin{equation}\label{reclen}
C - |\widetilde A| = C - |\tau(A)| = \big|\tau \big( [0,1] \setminus A\big)\big| 
=| [0,1] \setminus A| = 1 - |A| \leq \xi\,.
\end{equation}
And moreover, for every $x\in [0,C]$, we have
\begin{equation}\label{rat}
\big|\varphi_2(\tau(x)) - \varphi(x)\big| = \big|\varphi_1(x)- \varphi(x)\big| \leq \xi
\end{equation}
recalling that~(\ref{claimmain}) holds for $\varphi_1$.

\step{III}{The function $\varphi_3$, which is short on each $\widetilde J_i$.}
We can now further modify the function $\varphi_2$. We simply apply Proposition~\ref{nLI} (with $L+\xi$ in place of $L$) on each of the intervals $\widetilde J_i$; more precisely, we first apply the proposition to the interval $\widetilde J_1$, finding a map $\varphi_3^1:[0,C_1]\to \R^2$ which is $(L+\xi)$-biLipschitz and satisfies~(\ref{propprel}). Then, we apply again Proposition~\ref{nLI} to the interval $\widetilde J_2^1$, which is simply the interval $\widetilde J_2$ translated of a distance $C-C_1$, so that $\varphi_3^1$ on $\widetilde J_2^1$ coincides with $\varphi_2$ on $\widetilde J_2$. Going on with the obvious recursion, after $N$ steps we have finally defined the function $\varphi_3: [0,C']\to \R^2$, which is $(L+\xi)$-biLipschitz, and which by construction is finitely piecewise linear on some subset $A'$ of $[0,C']$ satisfying
\[
C' - |A'| = C - |\widetilde A| \leq \xi\,,
\]
recalling~(\ref{reclen}). We can say also something more precise: $[0,C']\setminus A'$ is the union of $N$ intervals $J_i'$, and on the closure of each of them the function $\varphi_3$ is of class ${\rm C}^1$. In addition, since the function has been changed only on the intervals $\widetilde J_i$, and doing so those intervals have been shrinked, there exists a function $\tilde\tau:[0,C]\to [0,C']$ such that
\begin{align*}
\tilde\tau(\widetilde A) = A'\,, && \tilde\tau'(x) = 1\quad \forall\, x\in \widetilde A\,,
\end{align*}
and by~(\ref{rat}) we get
\begin{equation}\label{ratt}
\big| \varphi_3\big(\tilde\tau(\tau(x))\big) - \varphi(x)\big| =
\big| \varphi_2(\tau(x)) - \varphi(x)\big| \leq \xi \qquad \forall\, x\in \big(\tilde\tau \circ \tau\big)^{-1} (A')\,.
\end{equation}

\step{IV}{The function $\varphi_4$, which is finitely piecewise linear.}
From the previous steps, we have now a function $\varphi_3:[0,C']\to \R^2$ which is finitely piecewise linear on almost the whole $[0,C']$, and which is ${\rm C}^1$ on the closure of each of the $N$ intervals $J_i'$ where it is not already piecewise linear.\par

As pointed out in Remark~\ref{C1}, it is elementary how to modify a ${\rm C}^1$ function into a finitely piecewise linear one, up to increase the biLipschitz constant of an arbitrarily small constant. Applying this argument $N$ times, to each of the intervals $J_i'$, we get then a finitely piecewise linear function $\varphi_4:[0,C']\to \R^2$, which is $(L+2\xi)$-biLipschitz and which of course satisfies $\varphi_4(0)=\varphi(0)$ and $\varphi_4(C')=\varphi(1)$. And moreover, since $\varphi_4=\varphi_3$ on $A'$, then the estimate~(\ref{ratt}) holds also with $\varphi_4$ in place of $\varphi_3$.

\step{V}{The ``final'' function $\varphi_\eps$.}
We are finally in position to conclude the proof of our Theorem. The function $\varphi_4$ was already almost perfect, its only flaw being that it is defined on the interval $[0,C']$ instead than on $[0,1]$. Nevertheless, we can easily observe that
\begin{equation}\label{esob}
1-2\xi \leq C' \leq 1\,.
\end{equation}
Indeed, the fact that $C'\leq 1$ is obvious, since all our modifications of the map $\varphi$ either left the domain unchanged or shrinked it. On the other hand, recalling~(\ref{reclen}) and~(\ref{star}), we have
\begin{equation}\label{abt}
C' \geq |A'| = |\widetilde A| \geq C-\xi \geq 1-2\xi\,,
\end{equation}
so the validity of~(\ref{esob}) is established.\par

The function $\varphi_\eps$ will then be simply a reparameterization of $\varphi_4$, precisely we set $\varphi_\eps : [0,1]\to\R^2$ as
\[
\varphi_\eps (x) = \varphi_4 ( C' x)\,.
\]
The function $\varphi_\eps$ is then finitely piecewise linear by construction, of course it satisfies $\varphi_\eps(0)=\varphi(0)$ and $\varphi_\eps(1)=\varphi(1)$, and it is at most $(L+\xi)(1+3\xi)$-biLipschitz, hence in particular $(L+\eps)$-biLipschitz if $\xi(L,\eps)$ is suitably small.\par

To conclude, we have then just to check that $\|\varphi_\eps-\varphi\|_{L^\infty}\leq \eps$. To do so, let us take a generic $z \in [0,1]$; first of all, we can find $x\in A'' = A'/C'$ such that, also by~(\ref{abt}),
\[
|z-x| \leq 1 - |A''| = 1 - \frac{|A'|}{C'} \leq 2\xi\,.
\]
Then, we define $y=(\tilde\tau\circ\tau)^{-1}(C'x)$ and, since
\[
y-2\xi \leq y - (1-C') \leq \tilde\tau(\tau(y))=C' x \leq y \,,
\]
also recalling that~(\ref{ratt}) holds also with $\varphi_4$ in place of $\varphi_3$ we deduce
\begin{align*}
|y-x| \leq |y-C'x| + |C'-1| \leq 4\xi\,, &&
\big|\varphi_\eps(x) -\varphi(y)\big|
=\big|\varphi_4(C'x) - \varphi(y)\big|
\leq \xi\,.
\end{align*}
Recalling that $\varphi$ is $L$-biLipschitz while $\varphi_\eps$ is $(L+\eps)$-biLipschitz, the above estimates give us
\[\begin{split}
|\varphi_\eps(z) - \varphi(z)| &\leq |\varphi_\eps(z) - \varphi_\eps(x)| + |\varphi_\eps(x) - \varphi(y) |  + |\varphi(y)-\varphi(z)|\\
&\leq (L+\eps)|z -x| + \xi  + L|y-z|
\leq (L+\eps) 2\xi + \xi + 6 L \xi\,.
\end{split}\]
Since $z\in [0,1]$ was generic, and since the last quantity is smaller than $\eps$ as soon as $\xi=\xi(L,\eps)$ is small enough, the $L^\infty$ estimate has been established, and the proof is concluded.
\end{proof}

A straightforward consequence of our construction is the following.

\begin{corollary}\label{cormain}
Assume that $\varphi:[0,1]\to\R^2$ is an $L$-biLipschitz function, linear on $[0,a]$ and on $[1-a,1]$ for some $a\ll 1$. Fix two quantities $0<a'<a$ and $\eps>0$. There exists an $(L+\eps)$-biLipschitz function $\varphi_\eps:[0,1]\to\R^2$ such that~(\ref{claimmain}) holds, $\varphi_\eps$ is finitely piecewise linear on $[0,1]$, and $\varphi_\eps$ coincides with $\varphi$ on the intervals $[0,a']$ and $[1-a',1]$.
\end{corollary}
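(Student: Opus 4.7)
The strategy is to follow the construction in the proof of Theorem~\ref{main} while exploiting the linearity of $\varphi$ on $[0,a]\cup[1-a,1]$ to preserve $\varphi$ on the smaller intervals $[0,a']\cup[1-a',1]$. The key modifications will be a careful choice of the discretization in Proposition~\ref{step1} and the replacement of the final rescaling of Step~V by a tailored piecewise affine reparametrization.

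First, I would apply Proposition~\ref{step1} to $\varphi$ with parameter $\xi\ll\eps$, choosing the grid $\{J_m\}$ fine enough that $[0,a']\cup[1-a',1]$ is covered by grid intervals strictly contained in $[0,a]\cup[1-a,1]$. On such intervals $\varphi$ is already linear, so one may simply set $\varphi_m=\varphi$; on the remaining intervals the usual construction of Proposition~\ref{step1} applies. The resulting $\varphi_1$ then coincides with $\varphi$ on $[0,a']\cup[1-a',1]$, and all ``bad intervals'' lie strictly inside $(a,1-a)$.

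Next I would carry out Steps~II--IV of the proof of Theorem~\ref{main}, choosing the buffer length $\ell$ in Step~II small enough that $A^-\cap\big([0,a']\cup[1-a',1]\big)=\emptyset$. Then the reparametrization $\tau$ is the identity on $[0,a']$ and a translation $x\mapsto x-(1-C)$ on $[1-a',1]$, and Steps~III--IV affect the function only on the (images of the) bad intervals. The output is a piecewise linear, $(L+2\xi)$-biLipschitz function $\varphi_4:[0,C']\to\R^2$ satisfying $\varphi_4=\varphi$ on $[0,a']$ and $\varphi_4(y)=\varphi(y+(1-C'))$ for $y\in[C'-a',C']$, with $1-C'$ of order $\xi$. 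Instead of the rescaling of Step~V, I would then set $\varphi_\eps:=\varphi_4\circ\sigma$, where $\sigma:[0,1]\to[0,C']$ is the piecewise linear map that is the identity on $[0,a']$, the shift $x\mapsto x-(1-C')$ on $[1-a',1]$, and affine with slope $c=(C'-2a')/(1-2a')$ on $[a',1-a']$. A direct check gives $\varphi_\eps=\varphi$ on $[0,a']\cup[1-a',1]$, piecewise linearity of $\varphi_\eps$ on $[0,1]$, and the $L^\infty$ estimate.

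The main obstacle is verifying the $(L+\eps)$-biLipschitz property of $\varphi_\eps$. Since $\varphi_4$ is $(L+2\xi)$-biLipschitz and $\sigma$ has slope in $[c,1]$ with $c=1-O(\xi)$, a case analysis on whether $y$ and $z$ lie in $[0,a']$, in $[a',1-a']$, in $[1-a',1]$, or straddle two of these regions reduces the bound to the quantitative inequality $(L+2\xi)/c\le L+\eps$ together with analogous estimates when $y,z$ straddle intervals; all of these are achievable by choosing $\xi=\xi(L,\eps,a')$ sufficiently small.
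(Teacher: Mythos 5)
Your proposal is correct and follows essentially the same route as the paper: take $N$ large and $\ell$ small so that Steps I--IV of the proof of Theorem~\ref{main} leave $\varphi$ unchanged near the endpoints, then replace the global linear rescaling of Step~V by a three-piece reparametrization that is the identity near $0$, a shift near $1$, and affine in the middle. The only cosmetic difference is that you split the reparametrization at $a'$ rather than at $a$; both choices give the required $(L+\eps)$-biLipschitz bound for $\xi$ small.
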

\begin{proof}
To obtain the required function $\varphi_\eps$, it is enough to check the proof of Theorem~\ref{main} and to modify it only very slightly.\par

Indeed, the first step of that proof simply consists in taking a function $\varphi_1$ given by Proposition~\ref{step1}. On the other hand, from a quick look at the proof of Proposition~\ref{step1}, it is obvious that $\varphi_1$ coincides with $\varphi$ in all the intervals $[m/N,(m+1)/N]$ where $\varphi$ is linear. As a consequence, we can select any $\delta>0$ and, up to take $N$ much bigger than $1/a$ and $1/\delta$ in the proof of Proposition~\ref{step1}, we get a function $\varphi_1$ which coincides with $\varphi$ on $[0,a-\delta]$ and on $[1-(a-\delta),1]$, and in particular these two intervals are contained in the set $A$ of Step~I.\par

In the second step of the proof of Theorem~\ref{main}, we just modified $\varphi_1$ in order to make it faster near the end of the good intervals, getting then a new function $\varphi_2:[0,C]\to\R^2$. Up to take $\ell$ smaller than $\delta$ there, we can do the same construction and we have then that $\varphi_2=\varphi$ on $[0,a-2\delta]$ and that $\varphi_2(x+C-1)=\varphi(x)$ for each $x\in [1-(a-2\delta),1]$.\par

In the third and fourth step of the proof, we defined a function $\varphi_4:[0,C']\to\R^2$, modifying $\varphi_2$ only in the bad intervals. Again, we can do exactly the same thing now, and we still have that $\varphi_4$ coincides with $\varphi$ on $[0,a-2\delta]$ and that $\varphi_4(x+C'-1)=\varphi(x)$ for each $x\in [1-(a-2\delta),1]$.\par

Finally, in the last step we defined the approximating function $\varphi_\eps$, which was obtained simply by ``changing the velocity'' of $\varphi_4$, namely, we set $\varphi_\eps(x)=\varphi_4(C'x)$. This time, we cannot do the same, otherwise we lose the information that $\varphi$ and $\varphi_\eps$ coincide near $0$ and $1$; nevertheless, it is clear that a solution is just to define
\[
\varphi_\eps(x) = \left\{\begin{array}{ll}
\varphi_4(x)  &\hbox{for $0\leq x \leq a$}\,,\\[5pt]
\varphi_4\bigg(\bal a+\frac{C'-2a}{1-2a}\,(x-a)\eal\bigg) \qquad &\hbox{for $a\leq x \leq 1-a$}\,,\\[10pt]
\varphi_4(x+C'-1)  &\hbox{for $1-a\leq x \leq 1$}\,.
\end{array}\right.
\]
Indeed, with this definition, the very same arguments as in Step~V of the proof of Theorem~\ref{main} still ensure that $\varphi_\eps$ is $(L+\eps)$-biLipschitz and that~(\ref{claimmain}) holds; moreover, $\varphi_\eps$ is finitely piecewise linear by definition. Finally, $\varphi_\eps$ coincides with $\varphi$ on $[0,a-2\delta]$ and on $[1-(a-2\delta),1]$ thus, provided that we have chosen $\delta$ smaller than $(a-a')/2$, the proof is concluded.
\end{proof}

\section{generalization to $\S^1$\label{sect5}}

In this section we generalize Theorem~\ref{main} in order to consider the case of a map defined on $\S^1$, instead than on $[0,1]$. To do so, we need the following standard definitions.

\begin{definition}
Let $\pi:\R\to \S^1$ be the map $\pi(t) = t \, ({\rm mod}\ 2\pi)$, let $\varphi:\S^1\to\R^2$ be any function, and let $a<b$ be any two real numbers such that $b-a<2\pi$. We denote by $\varphi^{ab}:[a,b]\to\R^2$ the function defined by $\varphi^{ab}(t) = \varphi(\pi(t))$ for any $a\leq t\leq b$. We say that the function $\varphi$ is \emph{finitely piecewise linear} if so is the function $\varphi^{ab}$ for any choice of $a$ and $b$.
\end{definition}

The goal of this last section is to prove is the following statement.
\begin{theorem}\label{main2}
Let $\varphi:\S^1\to\R^2$ be an $L$-biLipschitz function, and $\eps>0$. Then, there exists a finitely piecewise linear, $(L+\eps)$-biLipschitz function $\varphi_\eps:\S^1\to\R^2$ such that $\|\varphi-\varphi_\eps\|_{L^\infty}\leq \eps$.
\end{theorem}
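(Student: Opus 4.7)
The plan is to reduce the problem on $\S^1$ to the interval case already handled by Theorem~\ref{main}, or more conveniently by Corollary~\ref{cormain}, via a cut-and-glue procedure. The key subtlety is that, when one parametrizes $\S^1$ by $[0,1]$ after cutting at a point, the $\S^1$-distance (arc length) is generally smaller than the $[0,1]$-distance: pairs straddling the seam are closer on $\S^1$ than on $[0,1]$, so the $[0,1]$-biLipschitz estimate does \emph{not} automatically upgrade to an $\S^1$-biLipschitz estimate. The remedy is to arrange that, near the cut, the approximation coincides with an explicit linear segment of constant speed, so that the ``through the seam'' bi-Lipschitz inequality can be checked by hand.

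Concretely, I would first pick a Lebesgue point $x_0\in\S^1$ for $\varphi'$ and perform a purely local adaptation of Lemma~\ref{Lebesgue} in a chart around $x_0$: replace $\varphi$ on a short arc $\Gamma\subseteq\S^1$ around $x_0$ by the straight segment joining its endpoints, obtaining $\tilde\varphi:\S^1\to\R^2$ which equals $\varphi$ outside $\Gamma$, is linear on $\Gamma$ at constant speed $\sigma\in[1/L,L]$, is $(L+\eps/3)$-biLipschitz on $\S^1$, and satisfies $\|\tilde\varphi-\varphi\|_{L^\infty}\leq\eps/3$. All estimates of Lemma~\ref{Lebesgue} transfer because, on the scales involved, the $\S^1$-distance coincides with the underlying interval distance in the chart.

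Next I would cut $\S^1$ at $x_0$, parametrize the resulting arc by $[0,1]$ proportionally to arc length, and pull $\tilde\varphi$ back to a function $\hat\varphi:[0,1]\to\R^2$ with $\hat\varphi(0)=\hat\varphi(1)=\varphi(x_0)$. By construction $\hat\varphi$ is linear with constant speed $\sigma$ on some $[0,a]$ and on $[1-a,1]$, and (since $d_{\S^1}\leq d_{[0,1]}$) it is $(L+\eps/3)$-biLipschitz as a function on $[0,1]$. I would then apply Corollary~\ref{cormain} to $\hat\varphi$ with $L+\eps/3$ in place of $L$, with some $a'<a$, and with error parameter $\xi$ chosen so that $\xi\leq a'\eps/\bigl(3(L+\eps)^2\bigr)$; this gives a finitely piecewise linear, $(L+2\eps/3)$-biLipschitz function $\hat\varphi_\eps:[0,1]\to\R^2$ agreeing with $\hat\varphi$ at the endpoints, coinciding with $\hat\varphi$ on $[0,a']\cup[1-a',1]$, and satisfying $\|\hat\varphi_\eps-\hat\varphi\|_{L^\infty}\leq\xi$. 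Since the endpoints match and $\hat\varphi_\eps$ is linear there at the same speed $\sigma$, $\hat\varphi_\eps$ descends to a well-defined finitely piecewise linear function $\varphi_\eps:\S^1\to\R^2$, and the $L^\infty$ bound $\|\varphi_\eps-\varphi\|_{L^\infty}\leq\xi+\eps/3<\eps$ is automatic.

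The main obstacle is to verify that $\varphi_\eps$ is $(L+\eps)$-biLipschitz on $\S^1$. For pairs $(s,t)$ whose shortest $\S^1$-path does not cross the cut, $d_{\S^1}(s,t)=|s-t|$ and the bound follows directly from the $[0,1]$-biLipschitz estimate on $\hat\varphi_\eps$. The delicate case is when the geodesic crosses the seam. If both points lie in the kept intervals $[0,a']\cup[1-a',1]$, then $\varphi_\eps=\tilde\varphi$ at both, both lie on the linear piece of $\tilde\varphi$, and $|\varphi_\eps(s)-\varphi_\eps(t)|=\sigma\,d_{\S^1}(s,t)$ with $1/L\leq\sigma\leq L$, giving the estimate even with constant $L$. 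If only one of them, say $s$, lies in $[0,a']\cup[1-a',1]$, a direct check gives $d_{\S^1}(s,t)\geq a'$, and the triangle inequality
\[
\bigl|\varphi_\eps(s)-\varphi_\eps(t)\bigr|\leq\bigl|\tilde\varphi(s)-\tilde\varphi(t)\bigr|+\bigl|\varphi_\eps(s)-\tilde\varphi(s)\bigr|+\bigl|\varphi_\eps(t)-\tilde\varphi(t)\bigr|\leq(L+\eps/3)\,d_{\S^1}(s,t)+2\xi
\]
together with the $(L+\eps/3)$-biLipschitz property of $\tilde\varphi$ on $\S^1$ from Step~1 and the choice of $\xi$ absorb the error into a factor $(L+\eps)$. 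The inverse inequality is handled identically, using the same lower bound $d_{\S^1}(s,t)\geq a'$. Choosing the parameters in the order $\eps\rightsquigarrow x_0,\Gamma\rightsquigarrow a,a'\rightsquigarrow\xi$ closes the argument.
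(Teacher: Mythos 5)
The proposal collapses at a single, structural point: after cutting the circle at $x_0$ and pulling back to $\hat\varphi:[0,1]\to\R^2$ with $\hat\varphi(0)=\hat\varphi(1)=\varphi(x_0)$, you claim ``(since $d_{\S^1}\leq d_{[0,1]}$) it is $(L+\eps/3)$-biLipschitz as a function on $[0,1]$'' and then feed $\hat\varphi$ into Corollary~\ref{cormain}. But $\hat\varphi$ is \emph{not} biLipschitz on $[0,1]$ for any constant: it is not even injective, since $\hat\varphi(0)=\hat\varphi(1)$ while $|0-1|=1$, so the inverse-Lipschitz inequality fails maximally at the endpoints (and, more quantitatively, fails for every pair $s\approx 0$, $t\approx 1$ that is close on $\S^1$ but far in $[0,1]$). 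The inequality $d_{\S^1}\leq d_{[0,1]}$ transfers only the \emph{upper} Lipschitz bound from $\S^1$ to $[0,1]$; the \emph{lower} bound would require the reverse comparison $d_{[0,1]}\lesssim d_{\S^1}$, which is false for pairs straddling the seam. Since the hypothesis of Corollary~\ref{cormain} is genuinely violated, nothing downstream of this step is justified: the corollary's conclusion cannot be invoked, and the construction of $\hat\varphi_\eps$ is vacuous. (The later case analysis about seam-crossing pairs is a reasonable check of the \emph{final} function, but it cannot repair the fact that the intermediate object $\hat\varphi_\eps$ was never produced.)

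The correct remedy, which is what the paper does, is to cut at \emph{many} Lebesgue points $p_1,\dots,p_M$, chosen so that every arc $\arc{p_ip_{i+1}}$ has angular length less than a small $\theta$ satisfying $(1-\eps')\leq 2\sin(\theta/2)/\theta\leq 1$. One first straightens $\varphi$ near each $p_i$ (this is your step with $\tilde\varphi$, but done at $M$ points), and then applies Corollary~\ref{cormain} \emph{separately on each arc}. The point of the smallness of $\theta$ is that within an arc of that angular length the chordal $\S^1$-distance and the linear parameter distance are comparable up to the factor $1-\eps'$, so the restricted map is genuinely $L'$-biLipschitz on an interval and Corollary~\ref{cormain} applies legitimately. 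The global gluing is then verified exactly as in your final step (pairs in the same arc; pairs in different arcs where the modification was not made; pairs involving a modified region, controlled via the $L^\infty$ bound and a positive lower bound on their distance). A single cut cannot work because it makes the induced interval function non-injective; you must cut often enough that each piece sees only a small fraction of the circle.
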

\begin{proof}
First of all, let us fix a small positive quantity $\eps'$, to be specified later, and let us also fix a small $\theta>0$ such that
\begin{equation}\label{deftheta}
1-\eps' \leq \frac{2\sin(\theta/2)}\theta \leq 1\,.
\end{equation}
We divide the proof in few steps for clarity.
\step{I}{The Lebesgue points for $\varphi$ and the function $\varphi_1$.}
Let $p\in \S^1$ be a Lebesgue point for $\varphi$, that is, for any $a<z<b$ such that $p=\pi(z)\in \big(\pi(a),\pi(b)\big)$ the point $z$ is a Lebesgue point for $\varphi^{ab}$. In particular, let us choose $z\in\R$ so that $p=\pi(z)$, and let us set for a moment $a=z-\theta/2$ and $b=z+\theta/2$. Notice that, by~(\ref{deftheta}), for any $a<x<y<b$ one has
\begin{equation}\label{stes}
(1-\eps') |y-x| \leq |\pi(y)-\pi(x)|\leq |y-x|\,.
\end{equation}
Let us then concentrate ourselves on the function $\varphi^{ab}$, which is easily $L_1$-biLipschitz with $L_1=L(1-\eps')^{-1}$ thanks to~(\ref{stes}). The point $z$ is a Lebesgue point for $\varphi^{ab}$, hence we can apply to it Lemma~\ref{Lebesgue}, using $[a,b]$ in place of $[0,1]$ and with constant $\eps'$ in place of $\eps$. We get then a constant $\bar \ell$ and the sets $I_\ell(z)$ for $\ell\leq\bar\ell$. We can arbitrarily select $\ell\ll \theta$ and two points $s<z<t$ in $I_\ell(z)$, and the lemma ensures us that the function $\psi:[a,b]\to\R^2$ defined as $\psi=\varphi^{ab}_{st}$ is $L_2=(L_1+\eps')$-biLipschitz and satisfies $\|\psi-\varphi^{ab}\|_{L^\infty}\leq \eps'$. We can then define the function $\tilde\varphi:\S^1\to\R^2$ as $\tilde\varphi=\varphi$ outside the arc $\pi(a)\pi(b)$, and $\tilde\varphi(q) = \psi(\pi^{-1}(q))$ inside. By construction we have that $\|\varphi-\tilde\varphi\|_{L^\infty}\leq \eps'$, and moreover $\tilde\varphi$ is $L_3$-biLipschitz with $L_3=L_2(1-\eps')^{-1}$: this can be obtained arguing exactly as in Lemma~\ref{Lebesgue}, and keeping in mind that $\ell\ll\theta$.\par

We want now to use a similar argument with many Lebesgue points, instead of just one. To do so, let us select finitely many Lebesgue points $p_1,\, p_2,\, \dots \, , \, p_M$ in $\S^1$, such that every arc $\arc{p_ip_{i+1}}$ in $\S^1$ has length less than $\theta$, and it does not contain other points $p_j$ (of course, as usual we denote $p_{M+1}\equiv p_1$). For each of these points, say $p_i$, we can repeat the argument above, selecting $\ell$ much smaller than the minimal distance between two of the points $p_j$, and finding a function $\tilde\varphi_i$ which is a segment between $\tilde\varphi_i(s_i)=\varphi(s_i)$ and $\tilde\varphi_i(t_i)=\varphi(t_i)$. Hence, each function $\tilde\varphi_i$ coincides with $\varphi$ in all $\S^1$ except a small arc $\arc{s_it_i}$ around $p_i$, and these arcs are all well disjoint. We can then define $\varphi_1:\S^1\to\R^2$ as the function which coincides with $\tilde\varphi_i$ in every $\arc{s_it_i}$, and with $\varphi$ otherwise. Arguing as in Section~\ref{sect2}, in particular keeping in mind that each length $\ell$ has been chosen much smaller than the distance between different points $p_j$, we obtain immediately that $\varphi_1$ is $L_4$-biLipschitz, with $L_4=L_3+\eps'$. Moreover, by construction $\varphi_1$ is linear on each arc $(s_i,t_i)$, and $\|\varphi_1-\varphi\|_{L^\infty}\leq \eps'$.

\step{II}{Modification in each arc $\arc{p_ip_{i+1}}$ and the function $\varphi_\eps$.}
We now restrict our attention to the function $\varphi_1$ on the arc $\arc{p_ip_{i+1}}$. This is an $L_4$-biLipschitz function, and its image is a segment at the beginning and at the end, that is, from $p_i$ to $t_i$ and from $s_{i+1}$ to $p_{i+1}$. Let us then take two real numbers $a<b$ with $b-a<2\pi$ and $\pi(a)=p_i$, $\pi(b)=p_{i+1}$, and let us call $\psi=\varphi_1^{ab}$. Moreover, let $a^+,\, b^- \in (a,b)$ be such that $\pi(a^+)=t_i$ and $\pi(b^-)=s_{i+1}$: hence, the function $\psi : [a,b]\to\R^2$ is an $L_5$-biLipschitz function with $L_5=L_4(1-\eps')^{-1}$, again by~(\ref{stes}), and it is linear in $[a,a^+]$ and in $[b^-,b]$. We can then apply Corollary~\ref{cormain}, so we get a piecewise linear function $\tilde\psi:[a,b]\to\R^2$, biLipschitz with constant $L_6=L_5+\eps'$, such that $\|\tilde\psi-\psi\|_{L^\infty}\leq \delta$, and coinciding with $\psi$ (thus, linear) on the two intervals $[a,a']$ and $[b',b]$, for two points $a<a'<a^+$ and $b^-<b'<b$, and for a suitable constant $\delta>0$ to be specified later.\par

We define then $\varphi_2^i:\S^1\to\R^2$ as the function which coincides with $\varphi_1$ out of the arc $\arc{p_ip_{i+1}}$, and with $\tilde\psi\circ \pi^{-1}$ inside the arc. Notice that the function $\varphi_2^i$ is piecewise linear in the arc $\arc{p_ip_{i+1}}$, and in particular it is linear and coincides with $\varphi_1$ in the small arcs $\arc{p_it_i'}$ and $\arc{s_{i+1}'p_{i+1}}$, where the points $t_i'\in \arc{p_it_i}$ and $s_{i+1}'\in\arc{s_{i+1}p_{i+1}}$ are $\pi^{-1}(a')$ and $\pi^{-1}(b')$ respectively. Moreover, we also have $\|\varphi_2^i-\varphi_1\|_{L^\infty}=\|\tilde\psi-\psi\|_{L^\infty}\leq \delta$.\par

Finally, we repeat the same construction for each $1\leq i\leq M$, and we define the final function $\varphi_\eps$ as the function coinciding with $\varphi_2^i$ in each arc $\arc{p_ip_{i+1}}$, so that $\|\varphi_\eps-\varphi_1\|_{L^\infty}\leq \delta$.

\step{III}{Conclusion.}
It is now only left to check that the function $\varphi_\eps$ satisfies the requirement of the Theorem. By construction we have that $\varphi_\eps$ is finitely piecewise linear, and moreover
\[
\|\varphi_\eps-\varphi\|_{L^\infty}\leq \|\varphi_\eps-\varphi_1\|_{L^\infty}+\|\varphi_1-\varphi\|_{L^\infty}\leq \delta+\eps'\,,
\]
so we have $\|\varphi_\eps-\varphi\|_{L^\infty}\leq \eps$ as soon as we have chosen $\delta$ and $\eps'$ small enough. To conclude, we only have to check that $\varphi_\eps$ is $(L+\eps)$-biLipschitz.\par

To do so, let us take two points $x,\, y\in \S^1$. Suppose first that they belong to a same arc $\arc{p_ip_{i+1}}$. Then, by construction we have, setting $L_7=L_6(1-\eps')^{-1}$,
\begin{equation}\label{last1}\begin{split}
|\varphi_\eps(y)-\varphi_\eps(x)|
&=|\varphi_2^i(y)-\varphi_2^i(x)|
=\big|\tilde\psi(\pi^{-1}(y))-\tilde\psi(\pi^{-1}(x))\big|
\leq L_6 |\pi^{-1}(y)-\pi^{-1}(x)|\\
&\leq L_6(1-\eps')^{-1} |y-x|
=L_7|y-x|\,,
\end{split}\end{equation}
since $\tilde\psi$ is $L_6$-biLipschitz by Step~II and again by~(\ref{stes}).

Suppose now, instead, that $x$ and $y$ belong to two different arcs, in particular let us take $x\in \arc{p_ip_{i+1}}$ and $y\in \arc{p_jp_{j+1}}$. We can divide this case in two subcases, namely, if the equality $\varphi_\eps=\varphi_1$ holds at both $x$ and $y$, or not. If $\varphi_\eps(x)=\varphi_1(x)$ and $\varphi_\eps(y)=\varphi_1(y)$, then since $\varphi_1$ is $L_4$-biLipschitz we have that
\begin{equation}\label{last2}
|\varphi_\eps(y)-\varphi_\eps(x)|=
|\varphi_1(y)-\varphi_1(x)| \leq L_4 |y-x|\,.
\end{equation}
Finally, assume (by symmetry) that $\varphi_\eps(x)\neq \varphi_1(x)$. By construction, this implies that $x\in\arc{t_i's_{i+1}'}$; since $y\notin \arc{p_ip_{i+1}}$, we derive that $|y-x|\geq \eta$, where we define
\[
\eta = \min \Big\{ |d-c|:\, \exists\ 1\leq h \leq M,\, d\notin \arc{p_hp_{h+1}},\, c \in \arc{t_h's_{h+1}'}\Big\}\,.
\]
Notice that $\eta$ is strictly positive, since the arcs $\arc{p_hp_{h+1}}$ are only finitely many. Moreover, notice that we are free to decide $\delta$ depending on $\eta$, thanks to the construction of Step~II. As a consequence, recalling that $\varphi_1$ is $L_4$-biLipschitz and that $\|\varphi_\eps-\varphi_1\|_{L^\infty}\leq \delta$, we have for this last case
\begin{equation}\label{last3}
\frac{|\varphi_\eps(y)-\varphi_\eps(x)|}{|y-x|} \leq \frac{|\varphi_1(y)-\varphi_1(x)|}{|y-x|} + \frac{2\delta}{\eta}
\leq L_4 + \frac{2\delta}{\eta}\leq L_4+\eps'\,,
\end{equation}
where the last inequality is true as soon as $\delta$ has been chosen small enough.\par

We are then in position to conclude: it is straightforward to check that all the constants $L_j$ for $1\leq j\leq 7$ converge to $L$ when $\eps'$ go to $0$, then the estimates~(\ref{last1}), (\ref{last2}) and~(\ref{last3}) give that $\varphi_\eps$ is $(L+\eps)$-biLipschitz as soon as $\eps'$ is small enough.
\end{proof}


\begin{thebibliography}{10}
\bibitem{Ball2} J. M. Ball, Progress and puzzles in Nonlinear Elasticity, Poly-, Quasi- and Rank-One Convexity in Applied Mechanics, CISM International Centre for Mechanical Sciences Vol. {\bf 516} (2010), 1--15.
\bibitem{BMC}  J.C. Bellido and C. Mora-Corral, Approximation of H\"older continuous homeomorphisms by piecewise affine homeomorphisms, Houston J. Math. {\bf 37} (2011), no. 2, 449--500.
\bibitem{DP} S. Daneri \& A. Pratelli, Smooth approximation of bi-Lipschitz orientation-preserving homeomorphisms, Ann. Inst. H. Poincar\'e Anal. Non Lin\'eaire, {\bf 31}, n. 3 (2014), 567--589\,.
\bibitem{HP} S. Hencl \& A. Pratelli, Diffeomorphic Approximation of $W^{1,1}$ Planar Sobolev Homeomorphisms, preprint (2014).
\bibitem{IwKovOnn} T. Iwaniec, L. V. Kovalev and J. Onninen, Diffeomorphic Approximation of Sobolev Homeomorphisms, Archive for Rational Mechanics and Analysis {\bf 201} (2011), no. 3, 1047--1067.


\end{thebibliography}
\end{document}